\theoremstyle{definition}
\newtheorem{definition}{Definition}[section]
\newtheorem{example}[definition]{Example}
\newtheorem{theorem}[definition]{Theorem}
\newtheorem{lemma}[definition]{Lemma}
\newtheorem{corollary}[definition]{Corollary}
{\theoremstyle{remark}
\newtheorem{remark}[definition]{Remark}
}
\def\@paper{Cyclic Cohomology Groups of Some Self-similar Sets}
\def\id#1{\def\@id{#1}}
\def\adviser#1{\def\@adviser{#1}}
\def\@maketitle{
\
\vspace{10mm}
\begin{center}
{\Large \@paper \par} 
\vspace{10mm}
{\Large \@date\par} 
{\large \@author}

{\Large \@id \par} 

\end{center}
\par\vskip 1.5em
}
\title{Rational Homotopy Theory and Model category}
\date{}
\author{Takashi Maruyama}
\begin{document}

\newcommand{\Addresses}{{
  \bigskip
  \footnotesize

  \textsc{Graduate School of Mathematics, Nagoya University, Nagoya, Japan}\par\nopagebreak
  \textit{E-mail address} \texttt{m12047d@math.nagoya-u.ac.jp}
}}

\maketitle\thispagestyle{empty}
\begin{center}
\end{center}
\thispagestyle{empty}


\begin{abstract}
We define a variant of the Young integration on some kinds of self-similar sets which are called {\it cellular self-similar sets}. This variant is an analogue of the Young integration defined on the unit interval. We give the criteria of the variant on cellular self-similar sets, and also show that the variant is a cyclic $1$-cocycle of the algebra of complex-valued $\alpha$-H\"{o}lder continuous functions on the cellular self-similar sets. This suggests that the cocycle is a variant of currents. 
\end{abstract}
\section{Introduction}

Fractal sets introduced by Mandelbrot \cite{man} are complex-behaved spaces difficult to analyse. For instance, for the Cantor sets, their Hausdorff dimensions are different, although all of them are topologically isomorphic. Since the Hausdorff dimension is an invariant of fractal sets which is stable under bi-Lipschitz transformations, it is hard to say that the (co)homology theories which are homotopy invariant can capture deeper topological quantities of fractal sets.

Connes introduced cyclic cohomology theory \cite{con1}, which turns out to be a generalisation of the de Rham homology theory. He proposed Quantised calculus in \cite{con2} and exploits the Dixmier trace as a non-smooth analogue of the integration on manifolds. Namely, he applied it to the Cantor sets and succeeded to recover their Minkowski contents as the value of a certain Dixmier trace. This result suggests that cyclic cohomology theory could be one of suitable methods to study fractal sets. 

Another approach to analyse some fractal sets was proposed by Moriyoshi and Natsume \cite{mn}. For the Sierpinski gasket $SG$, they exploit the algebra $C^{1}(SG)$ of Lipschitz functions on $SG$ to construct a cyclic $1$-cocycle $\phi$ of $C^{1}(SG)$. They also show that, when Lipschitz functions are seen as $1$-H\"{o}lder continuous functions, the regularity $\alpha = 1$ of $C^{1}(SG)$ for the well-definedness of $\phi$ can be reduced to the half of the Hausdorff dimension $\operatorname{dim}_{H}(SG)/2$.  

In order to define the cyclic $1$-cocycle $\phi$, Moriyoshi and Natsume exploit the Young integration on the unit interval $I$. It has following properties, which explain a reason why H\"{o}lder continuous functions are used and the Hausdorff dimension arises in the results of \cite{mn}: the Young integration on the unit interval was developed in \cite{Young}. This is a bilinear function from the product $W_{\alpha} \times W_{\beta}$ of the Wiener classes such that $\alpha + \beta > 1 = \dim_{H}(I)$ to complex numbers:
$$Y : W_{\alpha} \times W_{\beta} \rightarrow \mathbb{C}.$$
Especially, if we restrict the domain of $Y$ to the algebra $C^{\alpha}(I)$ of $\alpha$-H\"{o}lder continuous functions on $I$, the map $Y$ is well-defined for $2\alpha > 1 = \dim_{H}(I)$. We note that $C^{\alpha}(I)$ contains the algebra $C^{\infty}(I)$ of smooth functions on $I$, and then, for $f$, $g \in C^{\infty}(I)$ we further get
$$Y (f, g) = \int_{I}fdg.$$
In this sense, the Young integration may be considered as a generalisation of the integration of differential $1$-forms, and does make it possible to integrate some non-smooth functions. Moreover, for a Jordan curve $C$ composed of a finite number of unit intervals, the Young integration $Y$ along $C$ turns out to be a cyclic $1$-cocycle if $2 \alpha > 1 = \dim_{H}(C)$:
$$Y : C^{\alpha}(C) \times C^{\alpha}(C) \rightarrow \mathbb{C}.$$
All those results above motivate us to extend the cyclic $1$-cocycle $\phi$ of $C^{1}(SG)$ onto a certain class of fractal sets by exploiting the Young integration.
\newline

In this paper, we extend the cocycle of the Sierpinski gasket defined in \cite{mn} to a certain class of self-similar sets by exploiting the Young integration, and show that the cocycles can be applied to some examples. More detailed and precise statements are given as follows.

We first define cellular self-similar sets, the preliminary notions of which are given in Section 2.2 below. Cellular self-similar sets $K_{|X|}$ are self-similar sets that are based on linear cell complexes, and the unit interval is a prototype of cellular self-similar sets. The precise definition is as follows:
\begin{definition}[Definition 3.1] 
Let $|X|$ be a $2$-dimensional finite convex linear cell complex and  $\{F_{j}\}_{j \in S}$ a set of similitudes $F_{j} : |X| \rightarrow |X|$ indexed by a finite set $S$. We also let $ |X_{1}| = \bigcup_{j \in S} F_{j}(|X|)$. The triple $(|X|, S, \{F_{j}\}_{j \in S})$ is called a {\it cellular self-similar structure} if it satisfies
\begin{itemize}
\item[a)] $\partial|X| \subset \partial |X_{1}|$, and
\item[b)] $\operatorname{int}F_{i}(|X|) \cap \operatorname{int}F_{j}(|X|) = \emptyset $, for all $ i \neq j \in S$.
\end{itemize}
\end{definition}
Let $(|X|, S, \{F_{j}\}_{j \in S})$ be a cellular self-similar structure. Then $(|X|, S, \{F_{j}\}_{j \in S})$ yields a sequence $\{|X_{n}|\}_{n \in \mathbb{N}}$ of $2$-dimensional cell complexes, and, by Theorem 2.5 below, the sequence gives rise to the cellular self-similar set $K_{|X|}$ with respect to $(|X|, S, \{F_{j}\}_{j \in S})$. For every $n \in \mathbb{N}$, $|X_{n}|$ is subdivided into a simplicial complex $|X_{n}^{s}|$ by Lemma 1 of Chapter 1 in \cite{zeeman}. From this simplicial complex, we get 1-chains $b_{n}$, $I_{n}$, $o_{n}$ and $I_{n}\backslash I_{n-1} \in \tilde{S}_{1}(X_{n}^{s}; \mathbb{C})$ whose geometric incarnations are subspaces of $1$-skelton of $|X_{n}^{s}|$; see Section 3.1 for the details. 
   
On the other hand, the algebra $C^{\alpha}(|X_{n}^{s}|)$ of complex-valued $\alpha$-H\"{o}lder continuous functions defined on $|X_{n}^{s}|$ is a subspace of the function space $F^{0}(|X_{n}^{s}|; \mathbb{C}) = \{f : |X_{n}^{s}| \rightarrow \mathbb{C}\ \}$ as a $\mathbb{C}$-vector space. Therefore,  $C^{\alpha}(|X_{n}^{s}|)$ can generate a $\mathbb{C}$-vector space $C^{\alpha, 1}(|X_{n}^{s}|)$ with the differential and cup product of the Alexander-Spanier cochain complex \cite{spanier}, which is a subspace of $\operatorname{Hom}_{\mathbb{C}}(\tilde{S}_{1}(|X_{n}^{s}|), \mathbb{C})$; see Section 3.2 for the details. For $f$ and $g \in C^{\alpha}(K_{|X|})$ we have a cochain $f \smile \delta(g) - g \smile \delta(f) \in C^{\alpha, 1}(|X_{n}^{s}|)$ for any $n \in \mathbb{N}$, which is denoted by $\omega_{n}(f, g)$. Finally we set $\phi_{n}(f, g)$ as $\omega_{n}(f, g)(I_{n})$ and call the sequence $\{\phi_{n}(f, g)\}_{n \in \mathbb{N}}$ the cyclic quasi-1-cocycle of $f$ and $g$, the definition of which is given in Section 3.2. 

The first main theorem states that if $2 \alpha > \operatorname{dim}_{H}(K_{|X|})$,  we can define a bilinear map $\phi : C^{\alpha}(K_{|X|}) \times C^{\alpha}(K_{|X|}) \rightarrow \mathbb{C}$ by taking the limit of the cyclic quasi-$1$-cocycle $\{\phi_{n}(f, g)\}_{n \in \mathbb{N}}$. This implies that the bilinear map $\phi$ may be seen as a generalisation of the classical Young integration of the unit interval.
\begin{theorem}[Theorem 3.8, Existence theorem]
Let $(|X|, S, \{F_{j}\}_{j \in S})$ be a cellular self-similar structure with $\#S \geq 2$ and $K_{|X|}$ the cellular self-similar set with respect to $(|X|, S, \{F_{j}\}_{j \in S})$. We also let $C^{\alpha}(K_{|X|})$ be the algebra of $\alpha$-H\"{o}lder continuous functions on $K_{|X|}$. 
 If $2 \alpha > \operatorname{dim}_{H}(K_{|X|})$, then the cyclic quasi-1-cocycle $\{\phi_{n}(f,g)\}$ is a Cauchy sequence for any $f$, $g \in C^{\alpha}(K_{|X|})$ .
\end{theorem}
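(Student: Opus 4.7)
The plan is to prove a geometric decay of the form $|\phi_{n+1}(f,g) - \phi_n(f,g)| \leq C \rho^{n}$ with $\rho \in (0,1)$, from which the Cauchy property follows at once by summing a geometric series.

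First I would reduce to a single-cell refinement using self-similarity. By the cellular self-similar structure, the level-$n$ chain decomposes as $I_n = \sum_{w \in S^n} F_w(I_0)$ and the level-$(n+1)$ chain as $I_{n+1} = \sum_{w \in S^n} F_w(I_1)$, where $F_w = F_{w_1} \circ \cdots \circ F_{w_n}$. Since $\omega_n(f,g)$ and $\omega_{n+1}(f,g)$ both evaluate on a $1$-simplex $[x,y]$ by the same expression $f(x)g(y) - g(x)f(y)$, we obtain
$$\phi_{n+1}(f,g) - \phi_n(f,g) = \sum_{w \in S^n}\bigl[\omega(f,g)\bigl(F_w(I_1)\bigr) - \omega(f,g)\bigl(F_w(I_0)\bigr)\bigr],$$
which localises the task to a one-step refinement inside each cell $F_w(|X|)$.

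Second I would establish a local Young-type estimate. A direct computation with the Alexander--Spanier cup product gives the elementary subdivision identity
$$\omega(f,g)(x,y) - \omega(f,g)(x,z) - \omega(f,g)(z,y) = \bigl(f(x)-f(z)\bigr)\bigl(g(y)-g(z)\bigr) - \bigl(g(x)-g(z)\bigr)\bigl(f(y)-f(z)\bigr),$$
so the H\"{o}lder bound $|h(u)-h(v)|\leq \|h\|_\alpha|u-v|^\alpha$ controls each cross-term by $\|f\|_\alpha\|g\|_\alpha \delta_w^{2\alpha}$, where $\delta_w := \operatorname{diam} F_w(|X|)$. Iterating this over the finitely many elementary subdivisions needed to pass from $F_w(I_0)$ to $F_w(I_1)$, I would obtain
$$\bigl|\omega(f,g)(F_w(I_1)) - \omega(f,g)(F_w(I_0))\bigr| \leq N\,\|f\|_\alpha\|g\|_\alpha\,\delta_w^{2\alpha},$$
where $N$ is a combinatorial constant depending only on the data $(|X|, S, \{F_j\}_{j\in S})$. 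Summing over $w \in S^n$ and using $\delta_w \leq \operatorname{diam}(|X|)\prod_i r_{w_i}$ (with $r_j$ the contraction ratio of $F_j$) yields
$$|\phi_{n+1}(f,g) - \phi_n(f,g)| \leq N\,\|f\|_\alpha\|g\|_\alpha \operatorname{diam}(|X|)^{2\alpha}\Bigl(\sum_{j\in S}r_j^{2\alpha}\Bigr)^{n}.$$
Condition (b) of Definition 3.1 realises the open set condition with open set $\operatorname{int}|X|$, so Moran's theorem gives $\sum_{j\in S} r_j^{d} = 1$ for $d = \dim_H(K_{|X|})$. Since $r_j \in (0,1)$ and $2\alpha > d$, we conclude $\rho := \sum_{j\in S} r_j^{2\alpha} < 1$, which is the desired geometric decay.

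The step I expect to be the main obstacle is the local estimate. Because $I_0$ is a $1$-chain embedded in a $2$-dimensional complex, the refinement from $F_w(I_0)$ to $F_w(I_1)$ is not the classical one-point insertion of Young's integration; it combines the simplicial subdivision provided by Zeeman's lemma with the self-similar combinatorics, and condition (a), $\partial|X| \subset \partial|X_1|$, must be used to ensure that boundary edges shared by adjacent cells cancel so that no uncontrolled contribution remains. The technical core is therefore to organise this refinement as a bounded collection of elementary subdivisions whose combinatorial count $N$ is genuinely independent of both $n$ and the word $w$.
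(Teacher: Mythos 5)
Your overall strategy---bounding $|\phi_{n+1}(f,g)-\phi_{n}(f,g)|$ by a sum of local cross terms of size $c_{f}c_{g}\,\delta_{w}^{2\alpha}$ over the level-$n$ cells and then summing the geometric series $\sum_{n}(\sum_{j\in S}r_{j}^{2\alpha})^{n}$, which converges by Moran's formula once $2\alpha>\dim_{H}(K_{|X|})$---is the same as the paper's, and your ``elementary subdivision identity'' is exactly the paper's coboundary computation $\delta\omega_{n}(f,g)(x,z,y)=(\delta f\smile\delta g-\delta g\smile\delta f)(x,z,y)$. However, your opening decomposition is wrong as written, and not merely notationally. With the paper's definitions $I_{0}=b_{0}-o_{0}=0$ (every $1$-simplex of $b_{0}$ lies on $\partial|X|$), so $\sum_{w\in S^{n}}F_{w}(I_{0})=0\neq I_{n}$; more importantly, $I_{n+1}\neq\sum_{w\in S^{n}}F_{w}(I_{1})$, because the right-hand side produces only the cycles created at step $n+1$ (for the Sierpinski gasket, the $3^{n}$ smallest inverted triangles), whereas $I_{n+1}$ also contains the resubdivision, in the level-$(n+1)$ simplicial structure, of all the older cycles already present in $I_{n}$. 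Consequently
$$I_{n+1}-I_{n}\ =\ \sum_{w\in S^{n}}F_{w}(I_{1}\backslash I_{0})\ +\ \bigl[(\text{resubdivision of }I_{n})-I_{n}\bigr],$$
and your displayed telescoping identity silently drops the second bracket. That bracket is precisely the Young-type refinement defect; it does not vanish and needs its own estimate. The paper handles both contributions at once by building the collar complex $|K^{s}_{n,n+1}|$ with $\tilde{\partial}_{2}(\tilde{s}_{n,n+1})=I_{n}-I_{n+1}$ and splitting the $2$-simplices $\epsilon(s_{n,n+1})$ into those lying over the old boundary and those filling the new cycles.

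The second gap is the constant $N$. You assert a bound on the number of elementary subdivisions ``depending only on the data'' and then name its uniformity in $n$ and $w$ as the expected main obstacle---but that uniformity is the actual content of the step, not a detail to be deferred. The paper supplies it in two pieces: Lemma 3.3 shows that each $1$-cell of $\partial|X_{n}|$ is cut into at most $M$ pieces in $|X_{n+1}|$, uniformly in $n$ (this controls the resubdivision terms via $\#\rho^{-1}(\omega)<M$), and the fixed triangulation of $|\tilde{K}_{0,1}|$ transported by the inclusions $\tilde{i}_{\omega}$ gives the uniform count $L\cdot\overline{M}$ for the new cycles. Without these two uniform counts the sum over $w\in S^{n}$ cannot be bounded by $(M+L\cdot\overline{M})\cdot(\sum_{j\in S}r_{j}^{2\alpha})^{n}$, so as it stands your proposal records the correct final estimate but omits the decomposition and the combinatorial lemma that make it valid.
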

The map $\phi$ is originally defined by Moriyoshi and Natsume \cite{mn} for the algebra $C^{Lip}(SG)$ of complex-valued Lipschitz functions on the Sierpinski gasket $SG$, and the construction is based on the classical Young integration on the unit interval. The authors use the simplexes $I_{n}$ to prove the existence of the cyclic cocycle of the Sierpinski gasket. An obstacle to extend the construction to cellular self-similar sets is that, for each $n \in \mathbb{N}$, the lengths of $1$-simplices belonging in $|X_{n}|$ are not equal. The key technical ingredient to overcome the difficulty is the existence of $2$-dimensional simplicial complex $|K_{n,n+1}^{s}|$ whose boundary is a disjoint union of $\partial(|X_{n}|)$ and $\partial(|X_{n+1}|)$. By properties of cellular self-similar sets, we can prove that lengths of $1$-simplices of $|K_{n, n+1}|$ have an upper bound which tends to $0$ as $n \rightarrow \infty$. This property plays a crucial role to prove that $\{\phi_{n}(f, g)\}_{n \in \mathbb{N}}$ is a Cauchy sequence.

The proof of the above theorem immediately yields the following corollary. This proves that the bilinear map $\phi$ is a non-commutative representation of the Young integration.
\begin{corollary}[Corollary 3.10]
For any $f$, $g \in C^{\alpha}(K_{|X|})$ with $2 \alpha > \dim (K_{|X|})$, we have 
$$\phi(f, g) = 2 \int_{\partial |X|}^{Young} f dg\ = 2 \cdot\ ({\rm Young\ integration\ of\ }f{\rm \ and\ }g{\rm \ along\ }\partial|X|).$$
In particular, if $|X| \neq |X_{1}|$, for $1$ and $x := id \in C^{\alpha}(K_{|X|})$, we get  
$$\phi(1, x) = 2 \int_{\partial |X|}^{Young}  dx =2 \cdot ( {\rm length\ of}\ \partial |X|).$$ 
\end{corollary}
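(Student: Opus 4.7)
The corollary follows by directly unpacking the Cauchy limit established in Theorem~3.8 and identifying it with the Young integral along $\partial|X|$.

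First, I would rewrite $\omega_n(f,g) = f \smile \delta g - g \smile \delta f$ via the Alexander--Spanier formulas: for each oriented 1-simplex $[p,q]$ of $|X_n^s|$,
$$\omega_n(f,g)([p,q]) = f(p)\bigl(g(q) - g(p)\bigr) - g(p)\bigl(f(q) - f(p)\bigr),$$
so $\phi_n(f,g) = \omega_n(f,g)(I_n)$ is a finite Riemann--Young-type sum over the oriented 1-simplices comprising $I_n$. By the construction in Section~3.1, the geometric realisation of $I_n$ is a subdivision of $\partial|X|$, and the mesh of this subdivision tends to $0$ as $n \to \infty$---this is precisely the upper-bound estimate on the lengths of 1-simplices of $|K_{n,n+1}^s|$ that played a key role in the proof of Theorem~3.8.

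Second, since $\partial|X|$ is a finite union of line segments (in particular $\dim_H(\partial|X|) = 1$) and $2\alpha > \dim_H(K_{|X|}) \geq 1$, we are inside the well-definedness range for the classical Young integration on $\partial|X|$. Standard Young estimates then guarantee that the Riemann--Young sums along $I_n$ converge to $\int_{\partial|X|}^{Young}(f\,dg - g\,df)$. Using the identity $f\,dg - g\,df = 2f\,dg - d(fg)$ and the vanishing $\int_{\partial|X|}^{Young} d(fg) = 0$ along the closed polygonal boundary $\partial|X|$, this limit simplifies to $2\int_{\partial|X|}^{Young} f\,dg$, proving the first formula.

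The second formula is the case $f=1$, $g=x$: then $\delta(1) = 0$ forces $\omega_n(1,x) = \delta(x)$, and the resulting telescoping sum over the subdivision of $\partial|X|$ recovers the Young integral $\int_{\partial|X|}^{Young} dx$, which evaluates to the length of $\partial|X|$ under the author's convention, giving $\phi(1,x) = 2\cdot\mathrm{length}(\partial|X|)$. The main subtlety I anticipate is justifying the interchange of $\lim_n$ with the Young-type partition refinement at the low regularity $2\alpha > 1$, but this is essentially the content of the estimates already in the proof of Theorem~3.8; the remaining steps are purely algebraic.
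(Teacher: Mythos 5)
Your argument rests on the claim that the geometric realisation of $I_n$ is a subdivision of $\partial|X|$ with mesh tending to zero, so that $\phi_n(f,g)=\omega_n(f,g)(I_n)$ is directly a Riemann--Young sum along $\partial|X|$. That is not what $I_n$ is. In Section~3.1, $b_n$ is the full boundary chain of $|X_n|$, $o_n$ is the part of $b_n$ supported on the \emph{outer} boundary $\partial|X|$, and $I_n=b_n-o_n$ is the \emph{inner} chain, supported on the boundaries of the lacunae created up to stage $n$ (for the Sierpinski gasket these are the upside-down triangles; compare the figures for $\epsilon(I_0),\epsilon(I_1),\epsilon(I_2)$). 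So $\phi_n(f,g)$ is a sum over the inner cycles, not over a partition of $\partial|X|$, and your second step --- convergence of ``the Riemann--Young sums along $I_n$'' to the Young integral over $\partial|X|$ --- does not apply as written. The corollary is not obtained by unpacking the definition of $\phi_n$; it is a discrete Stokes-type statement transferring the value from the inner cycles to the outer boundary.

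The paper's proof supplies exactly the missing step: it writes $\phi_n(f,g)=\omega_n(f,g)(b_n)-\omega_n(f,g)(o_n)$, notes that $\omega_n(f,g)(b_n)=\omega_n(f,g)(\tilde\partial_2(s_n))=(\delta f\smile\delta g-\delta g\smile\delta f)(s_n)$ is controlled by the same H\"older estimates as in Theorem~3.8 and hence tends to $0$ when $2\alpha>\dim_H(K_{|X|})$, and only then identifies $-\omega_n(f,g)(o_n)$ with Riemann--Young sums along the finite union of closed segments $\partial|X|$, whose mesh does tend to $0$, so that classical Young integrability yields the limit $2\int_{\partial|X|}f\,dg$ (via $f\,dg-g\,df=2f\,dg-d(fg)$ on the closed curve, as you correctly observe). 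Your estimates on the simplices of $|K^s_{n,n+1}|$ and your appeal to Theorem~2.2 are the right tools, but they must be applied to $b_n$ and $o_n$ separately after the decomposition $I_n=b_n-o_n$; without the vanishing of $\omega_n(f,g)(b_n)$ the stated identity is not established. The same correction is needed for your treatment of $\phi(1,x)$.
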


After we define $\phi$ of the cellular self-similar set $K_{|X|}$, we prove that $\phi$ is a cyclic $1$-cocycle of $C^{\alpha}(K_{|X|})$ and represents a nontrivial element in the first cyclic cohomology group $HC^{1}(C^{\alpha}(K_{|X|}))$. This theorem shows that $\phi$ may be seen as a non-commutative generalisation of the integration on manifolds.
\begin{theorem}[Theorem 3.11]
Under the assumption of the existence theorem{\rm :} 
\begin{itemize}
\item[a)] The bilinear map $\phi$ is a cyclic $1$-cocycle of $C^{\alpha}(K_{|X|})$. 
\item[b)] If $|X| \neq |X_{1}|$, the cocycle $\phi$ represents a non-trivial element $[\phi]$ in $HC^{1}(C^{\alpha}(K_{|X|}))$.
\end{itemize}
\end{theorem}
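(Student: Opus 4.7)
For part (a), the cyclic condition $\phi(g,f)=-\phi(f,g)$ follows at every finite stage from the manifest antisymmetry of $\omega_n(f,g)=f\smile\delta g-g\smile\delta f$ in $(f,g)$, so it passes to the limit. For the Hochschild cocycle condition I would invoke Corollary 3.10 to write $\phi(f,g)=2\int^{Young}_{\partial|X|}f\,dg$, expand $d(f_1f_2)=f_1\,df_2+f_2\,df_1$ by the Leibniz rule of the Young integration (valid for $\alpha$-H\"older functions on a rectifiable closed curve provided $2\alpha>1$, which is implied by the standing hypothesis $2\alpha>\dim_H(K_{|X|})\ge 1$), and use commutativity of $C^{\alpha}(K_{|X|})$ to collapse
\[
\phi(f_0f_1,f_2)-\phi(f_0,f_1f_2)+\phi(f_2f_0,f_1)=2\int^{Young}_{\partial|X|}\bigl(f_0f_1\,df_2-f_0f_1\,df_2-f_0f_2\,df_1+f_2f_0\,df_1\bigr)=0.
\]

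For part (b), I would use that $C^{\alpha}(K_{|X|})$ is commutative, so the Hochschild coboundary of any $0$-cochain $\tau:C^{\alpha}(K_{|X|})\to\mathbb{C}$ vanishes: $(b\tau)(a_0,a_1)=\tau(a_0a_1-a_1a_0)=0$. Consequently the image of $b$ from cyclic $0$-cochains into cyclic $1$-cochains is zero, $HC^1(C^{\alpha}(K_{|X|}))$ equals the space of cyclic $1$-cocycles, and $[\phi]\ne 0$ is equivalent to $\phi\ne 0$. By Corollary 3.10 applied to the constant function $1$ and the coordinate function $x=\mathrm{id}$, one has $\phi(1,x)=2\cdot(\text{length of }\partial|X|)>0$ whenever $|X|\ne|X_1|$, which supplies the desired non-vanishing.

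The main obstacle is the Hochschild cocycle identity. In the Young-integral approach one must justify the Leibniz rule for the variant of the Young integration along the closed piecewise-linear curve $\partial|X|$ rather than on the unit interval; this reduces to splitting $\partial|X|$ into its rectilinear pieces and verifying that boundary contributions between adjacent segments telescope correctly. A direct proof at the finite level, expanding $b\omega_n(f_0,f_1,f_2)$ via $\delta(ab)=a\smile\delta b+\delta a\smile b$ into a cyclic sum of triple cup products $(h\smile\delta f\smile g+f\smile\delta g\smile h+g\smile\delta h\smile f)(I_n)$, would instead require a H\"older estimate on $I_n$ strictly stronger than the one underlying Theorem 3.8, since the naive $\alpha$-H\"older bound $\sum_\sigma|\sigma|^\alpha$ blows up under only $2\alpha>\dim_H$, and any cancellation must come from the self-similar cycle structure of $I_n$.
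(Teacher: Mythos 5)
Your proposal is correct, but it takes a different route from the paper on the key step, and your closing assessment of the ``direct'' route is backwards. For the Hochschild condition in (a) you pass through Corollary 3.10 and the Leibniz rule for the Young integral along $\partial|X|$; the paper instead works entirely at the finite level. It expands $b\phi_n(f,g,h)$ with the cup-product Leibniz rule, adds the term $\delta(hfg)(I_n)=0$ (which vanishes because $I_n$ is a $1$-cycle), and finds that the cyclic sum collapses on each edge $(x,y)\in\epsilon(I_n)$ to $\pm\,(h(y)-h(x))(g(y)-g(x))(f(y)-f(x))$. This is bounded by $c_fc_gc_h|x-y|^{3\alpha}$, so the sum is $O\bigl((\sum_j r_j^{3\alpha})^n\bigr)\to 0$ already under $3\alpha>\dim_H(K_{|X|})$, which is weaker than the standing hypothesis $2\alpha>\dim_H(K_{|X|})$. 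So the direct computation does \emph{not} require a stronger H\"older estimate than Theorem 3.8 — the cancellation you correctly anticipate upgrades the exponent from $\alpha$ to $3\alpha$, and no appeal to the self-similar cycle structure beyond $\tilde\partial_1 I_n=0$ is needed. Your route is also viable: $\partial|X|\subset K_{|X|}$ forces $\dim_H(K_{|X|})\ge 1$, hence $2\alpha>1$ and the Young--Leibniz identity $\int h\,d(fg)=\int hf\,dg+\int hg\,df$ holds segment by segment with $O(|x_i-x_{i-1}|^{2\alpha})$ error terms; but it imports a fact about Young integration that the paper never proves, whereas the paper's argument is self-contained. For (b), your observation that commutativity kills $b^0$, so that $HC^1$ coincides with the cyclic $1$-cocycles and $[\phi]\neq 0$ reduces to $\phi(1,x)\neq 0$, is a slightly more elementary version of the paper's argument, which pairs $\phi$ against the Hochschild $1$-cycle $1\otimes x$; both hinge on the same commutativity and on Corollary 3.10.
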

For the proof of the first statement, we need to use the Leibniz rule of the cup product defined on the Alexander-Spanier cochain complex. Corollary 1.3 immediately completes the proof of the second statement since $1 \otimes x$ represents an element in the Hochschild homology group.

By Theorem 1.4, we find that the cocycle $\phi$ has the following additional properties: $\phi$ can detect the Hausdorff dimensions of cellular self-similar sets and distinguish them by their dimensions. 
For instance, we get the cocycles $\phi$ of the Sierpinski gasket $SG$ and the Sierpinski carpet $SC$, whose thresholds of the well-definedness are different. Namely, their thresholds are $\dim_{H}(SG) = \log_{2}3$ and $\dim_{H}(SC) = \log_{3}8$. Since bi-Lipschitz transformations preserve the Hausdorff dimension, the cocycles can prove that $SG$ and $SC$ are not bi-Lipschitz homeomorphic. Moreover, if we have a bi-Lipschitz transformation between cellular self-similar sets $K_{|X|}$ and $K_{|X^{\prime}|}$, the algebra $C^{\alpha}(K_{|X|})$ is isomorphic to $C^{\alpha}(K_{|X^{\prime}|})$. Therefore, we further get the following property: the cocycle $\phi$ is invariant under bi-Lipschitz transformations. 
After the proof of the main results, we apply the results to some examples, and extend the cocycle to some variants of cellular self-similar sets. 
\section*{Conventions}
We assume that algebras have unit unless otherwise stated, and all base rings of algebras is the field  $\mathbb{C}$ of complex numbers. The Euclidean space $\mathbb{R}^{n}$ is endowed with the standard Euclidean metric.
\begin{itemize}
\item $\otimes = \otimes_{\mathbb{C}}$.
\item$\mathbb{Z}_{\geq 0} = \mathbb{N} \cup \{0\}$.
\item $\partial X$ : the boundary of a topological space $X$.
\item $C^{\alpha}(X)$ : the algebra of complex-valued $\alpha$-H\"{o}lder continuous functions on a metric space $X$ whose sum and multiplication are given by the pointwise sum and multiplication.
\item $C^{Lip}(X)$ : the algebra of complex-valued Lipschitz functions on a metric space $X$ whose sum and multiplication are given by the pointwise sum and multiplication.
\end{itemize}

\subsection*{Acknowledgements}
The author thanks his advisor Hitoshi Moriyoshi for his constant advice and encouragements. The author also thanks Tatsuki Seto for many discussions.\\

\section{Preliminaries}
In this section, we recall the key materials for the main theorems, the Young integral, self-similar sets.
\subsection{Young Integral}
We begin with a quick review of the Young integral basically following \cite{Young} except the slight changes of the notation. 

Let $I$ be the unit interval $[0, 1]$ and $f$, $g$ complex-valued functions defined on $I$. We make a subdivision $\chi$ of $I$
$$0 = x_{0} < x_{1} < \cdots < x_{n-1} < x_{n} = 1$$
and define
$$F(\chi) = \sum_{i = 1}^{n} f(x_{i})(g(x_{i}) - g(x_{i-1})).$$
Then $F(\chi)$ can be also written as 
$$F(\chi) = \sum_{0 < i \leq j \leq n} \delta(f)(x_{i-1}, x_{i}) \cdot \delta(g)(x_{j-1}, x_{j}) + f(0)(g(1) -g(0)).$$
Here $\delta(f)(x_{i-1}, x_{i})$ denotes $f(x_{i}) - f(x_{i-1})$, and this notation is similar to the differential of the Alexander-Spanier cohomology theory; see also Chapter 6.4 in \cite{spanier}. We also let $\alpha, \beta > 0$, and denote by $S_{\alpha, \beta}[0, 1] = S_{\alpha, \beta}[0, 1 ; f, g]$ the upper bound of 
$$\biggl(\sum_{i}|\delta(f)(x_{i-1}, x_{i})|^{\frac{1}{\alpha}}\biggr)^{\alpha} \biggl(\sum_{i}|\delta(g)(x_{i-1}, x_{i})|^{\frac{1}{\beta}}\biggr)^{\beta}$$
for every subdivision of $I$. Following lemmas of \cite{Young}, if $\alpha + \beta > 1$ and $\xi \in [0, 1]$ is a division point of $\chi$, we have 
$$\Bigl|F(\chi) - f(\xi)(g(1) - g(0))\Bigr| \leq (1 + \zeta(\alpha + \beta)) \cdot S_{\alpha, \beta}[0, 1],$$
where $\zeta(\alpha + \beta)$ denotes the zeta function of $\alpha + \beta$.

This inequality yields to a more general inequality for the sum associated to $\chi$: for the given subdivision $\chi$, let a point $x_{i-1} \leq \xi_{i} \leq x_{i}$ for each $i$, and applying this inequality for each interval $[x_{i-1}, x_{i}]$ and summing up, we get 
$$\biggl|F(\chi) - \sum_{i = 1}^{n} f(\xi_{i})(g(x_{i}) - g(x_{i-1}))\biggr| \leq \{1 + \zeta(\alpha + \beta) \} \cdot \sum_{i = 1}^{n} S_{\alpha, \beta}[x_{i-1}, x_{i}\ ;\ f, g].$$ 
Moreover if we have another subdivision $\chi^{\prime}$ of $I$ and subdivision points $x_{j-1} \leq \xi^{\prime}_{j} \leq x_{j}$, then
$$\biggl| \sum_{i = 1}^{n} f(\xi_{i})(g(x_{i}) - g(x_{i-1})) - \sum_{j = 1}^{m} f(\xi^{\prime}_{j})(g(x^{\prime}_{j}) - g(x^{\prime}_{j-1}))\biggr| \hspace{4cm}$$ $$\hspace{2.5cm} \leq \{1 + \zeta(\alpha + \beta) \} \cdot \biggl\{ \sum_{i = 1}^{n} S_{\alpha, \beta}[x_{i-1}, x_{i}\ ;\ f, g] + \sum_{j = 1}^{m} S_{\alpha, \beta}[x^{\prime}_{j-1}, x^{\prime}_{j}\ ;\ f, g] \biggr\}.$$
\begin{definition}
We say that the {\it Stieltjes integral}
$$\int_{0}^{1} f dg$$
{\it exists in the Riemann sense with the value $J$}, if there exist $J \in \mathbb{C}$ and a function $\epsilon_{\delta} > 0$ with respect to the variable $\delta > 0$ such that $\epsilon_{\delta} \rightarrow 0$ as $\delta \rightarrow 0$, and if all the segments $[x_{i-1}, x_{i}]$ of a subdivision $\chi$ have lengths less than $\delta > 0$, then 
$$\Bigl|\ J - \sum_{i}f(\xi_{i})(g(x_{i}) - g(x_{i-1}))\Bigr| < \epsilon_{\delta}.$$

\end{definition}
We observe that, for the integrability in the Riemann sense, it is sufficient that the difference of any of two sums of the formula $\displaystyle \sum_{i}f(\xi_{i})(g(x_{i}) - g(x_{i-1}))$ of Definition 2.1, for each of which the length of $[x_{i-1}, x_{i}]$ is less than $\delta$, is less than $\epsilon_{\delta}$.  By the inequality just before Definition 2.1, this is the case if for some $\alpha, \beta > 0$ such that $\alpha + \beta > 1$ we have 
$$ \sum_{i = 1}^{n} S_{\alpha, \beta}[x_{i-1}, x_{i}\ ;\ f, g] < \epsilon_{\delta}.$$
For the existence of the integrability, we define $W_{\alpha}(\delta)$ to be the set of functions such that the value $V_{\alpha}^{(\delta)}(f)$ defined below has an upper bound:
$$V_{\alpha}^{(\delta)}(f) = \sup_{|\chi| \leq \delta}\biggl\{ \Bigl(\sum_{i} |f(x_{i}) - f(x_{i-1})|^{\frac{1}{\alpha}}\Bigr)^{\alpha}\biggr\} < \infty.$$
Here $|\chi|$ denotes the maximum length of the intervals of $\chi$, and the supremum runs over all subdivisions $\chi$ such that $|\chi|$ is less than or equal to $\delta$. Finally we define the {\it Wiener class} $W_{\alpha}$ to be the set of functions $f$ such that $V_{\alpha}^{(\delta)}(f)$ with respect to the variable $\delta$ has an upper bound. 


\begin{theorem}[Theorem on Stieltjes integrability]
If $f \in W_{\alpha}$ and $g \in W_{\beta}$ where $\alpha, \beta > 0$ and $\alpha + \beta > 1$, have no common discontinuities, their Stieltjes integral exists in the Riemann sense.
\end{theorem}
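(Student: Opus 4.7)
The plan is to verify the sufficient condition for Riemann--Stieltjes integrability highlighted just after Definition 2.1: given $\epsilon > 0$, exhibit $\delta > 0$ such that
$$\sum_{i=1}^{n} S_{\alpha, \beta}[x_{i-1}, x_i\ ;\ f, g] < \epsilon$$
for every subdivision $\chi$ with $|\chi| \le \delta$. Together with the two-subdivision inequality displayed just before Definition 2.1, this yields a Cauchy condition on the Riemann sums $\sum_i f(\xi_i)(g(x_i) - g(x_{i-1}))$ as the mesh shrinks, and completeness of $\mathbb{C}$ then delivers the limit $J = \int_0^1 f\, dg$. Thus the task reduces to showing $\sum_i S_{\alpha,\beta}[I_i] \to 0$ as $|\chi| \to 0$, where $I_i := [x_{i-1}, x_i]$.

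I would first reduce to variations via the pointwise estimate $S_{\alpha, \beta}[a, b; f, g] \le V_\alpha(f; [a,b]) \cdot V_\beta(g; [a,b])$, which is immediate from the definition of $S_{\alpha,\beta}$. Applying H\"{o}lder's inequality with conjugate exponents $1/\alpha$ and $1/(1-\alpha)$ (we may assume $\alpha, \beta \le 1$) together with the super-additivity of the $p$-variation, $\sum_i V_\alpha(f; I_i)^{1/\alpha} \le V_\alpha(f; I)^{1/\alpha}$, gives
$$\sum_i V_\alpha(f; I_i)\, V_\beta(g; I_i) \le V_\alpha(f; I) \cdot \biggl(\sum_i V_\beta(g; I_i)^{1/(1-\alpha)}\biggr)^{1-\alpha}.$$
Here the assumption $\alpha + \beta > 1$ enters crucially: because $1/(1-\alpha) > 1/\beta$, the excess factor $V_\beta(g;I_i)^{1/(1-\alpha) - 1/\beta}$ can be pulled out uniformly as $\bigl(\max_i V_\beta(g; I_i)\bigr)^{1/(1-\alpha) - 1/\beta}$, leaving a residual sum bounded by $V_\beta(g; I)^{1/\beta}$ by super-additivity. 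One arrives at
$$\sum_i S_{\alpha, \beta}[I_i] \le V_\alpha(f; I) \cdot V_\beta(g; I)^{(1-\alpha)/\beta} \cdot \bigl(\max_i V_\beta(g; I_i)\bigr)^{(\alpha + \beta - 1)/\beta},$$
with strictly positive exponent $(\alpha + \beta - 1)/\beta$ on the last factor.

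Everything therefore hinges on $\max_i V_\beta(g; I_i) \to 0$ as $|\chi| \to 0$, and this is where I expect the main obstacle. When $g$ is continuous, the result is the classical fact that a function of finite $\beta$-variation satisfies $V_\beta(g; [a, b]) \to 0$ uniformly in $a < b$ as $b - a \to 0$. When $g$ has jumps, the claim fails on any subinterval containing a jump point, and the ``no common discontinuities'' hypothesis must be invoked. For any $\eta > 0$, only finitely many jump points of $g$ have jump size exceeding $\eta$, since $V_\beta(g; I) < \infty$, so I would isolate each such large jump at $t$ in its own subinterval $I_{i(t)}$ of $\chi$. On the remaining subintervals $g$ has oscillation at most $\eta$, and the bound above applies to make their contribution to $\sum_i S_{\alpha,\beta}[I_i]$ decay with the mesh. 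On each isolated jump-interval $I_{i(t)}$, the Riemann-sum contribution $f(\xi_{i(t)})(g(x_{i(t)}) - g(x_{i(t)-1}))$ approaches $f(t)(g(t+) - g(t-))$ because $f$ is continuous at $t$, yielding a mesh-independent quantity that cancels in the two-sum difference $F(\chi) - F(\chi')$ up to the modulus of continuity of $f$ near the jump points. Combining the two contributions and sending $\eta \to 0$ after $|\chi| \to 0$ yields the desired Cauchy estimate and completes the proof.
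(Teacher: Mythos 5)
Your opening reduction --- that it suffices to make $\sum_{i} S_{\alpha,\beta}[x_{i-1},x_i\,;f,g]$ small for every subdivision of small mesh --- is precisely all that the paper itself records: Theorem 2.2 is quoted from Young \cite{Young} with no proof beyond that observation and the crude bound $S_{\alpha,\beta}\le V_\alpha(f)V_\beta(g)$. So everything after your first paragraph is your own, and while the H\"older/superadditivity computation giving
$\sum_i S_{\alpha,\beta}[I_i]\le V_\alpha(f;I)\,V_\beta(g;I)^{(1-\alpha)/\beta}\,(\max_i V_\beta(g;I_i))^{(\alpha+\beta-1)/\beta}$
is correct and very much in the spirit of Young's argument, the two places where the real work lies both have gaps. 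On the ``good'' subintervals you pass from ``oscillation of $g$ at most $\eta$'' to ``$V_\beta(g;I_i)$ small'', and that inference is false at the fixed exponent $\beta$: every increment being $\le\eta$ caps each term of $\sum_j|\Delta g_j|^{1/\beta}$ but not the number of terms, so $V_\beta(g;I_i)$ can stay comparable to $V_\beta(g;I)$. The repair is the interpolation $\sum_j|\Delta g_j|^{1/\beta'}\le(\max_j|\Delta g_j|)^{1/\beta'-1/\beta}\sum_j|\Delta g_j|^{1/\beta}$ with $\beta'<\beta$ still satisfying $\alpha+\beta'>1$ --- available only because the hypothesis $\alpha+\beta>1$ is strict --- and then running your estimate with $\beta'$ in place of $\beta$. (The ``classical fact'' you invoke for continuous $g$, uniform smallness of $V_\beta(g;[a,b])$ as $b-a\to0$, is also true but is itself a lemma requiring proof.)

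The treatment of the large jumps of $g$ does not work as written. In Definition 2.1 the subdivision $\chi$ is arbitrary, so you cannot ``isolate each large jump in its own subinterval''; and the claim that the contributions $f(\xi_{i(t)})(g(x_{i(t)})-g(x_{i(t)-1}))$ cancel between two arbitrary subdivisions with arbitrary tags is unjustified, since the two subdivisions place the jump in different intervals with different tags, and the two-sum inequality you rely on gives no control over terms you have excised from $\sum_i S_{\alpha,\beta}[I_i]$. The correct use of ``no common discontinuities'' keeps those intervals inside the sum: a subinterval $I_i$ of length $\le\delta$ containing a jump point $t$ of $g$ of size $>\eta$ lies in $[t-\delta,t+\delta]$, and since $f$ is continuous at $t$ and lies in $W_\alpha$ one has $V_\alpha(f;[t-\delta,t+\delta])\to0$ as $\delta\to0$ (another lemma, on the jump behaviour of the $1/\alpha$-variation function); hence $S_{\alpha,\beta}[I_i]\le V_\alpha(f;I_i)\,V_\beta(g;I)$ tends to $0$, and as there are only finitely many such intervals they are absorbed into $\sum_i S_{\alpha,\beta}[I_i]<\epsilon_\delta$ along with the rest. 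With these two repairs your outline closes up and coincides with Young's proof.
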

The Wiener class $W_{\alpha}$ is closed under the pointwise sum and scalar multiplication for $0 < \alpha < 1$. Therefore, if we regard the integration as a function from $W_{\alpha} \times W_{\alpha}$ to $\mathbb{C}$, this function turns out to be a bilinear function. 
On the other hand, it is clear from the definition that the set $C^{\alpha}(I)$ of complex-valued $\alpha$-H\"{o}lder continuous functions defined on $I$ is a subset of $W_{\alpha}$. Moreover, $C^{\alpha}(I)$ is closed under the pointwise multiplication in addition to the pointwise sum and scalar multiplication. The integration restricted to $C^{\alpha}(I)$ is also referred to as the {\it Young integration}. 
\begin{remark}
The Young integration is a special case of the Riemann-Stieltjes integration.
\end{remark}


\subsection{Self-similar Sets and Hausdorff Dimension}
In this subsection we briefly recall the definition of self-similar sets and the Hausdorff dimension. This subsection basically follows \cite{kigami}. At the end of this subsection, we give some examples of self-similar sets. 
We first begin with the definition of some maps from a metric space $(X, d)$ to itself. 
\begin{definition} Let $(X, d)$ be a metric space.
\begin{itemize}
\item[a)] A map $F : X \rightarrow X$ is a {\it contraction} if there exists $0 < r \leq 1$ such that $d(F(x), F(y)) \leq r \cdot d(x, y)$ for any $x$, $y \in X$. The real number $r$ is called the {\it contraction ratio}.
\item[b)] A contraction $F: X \rightarrow X$ is a {\it similitude} if $d(F(x), F(y)) = r \cdot d(x, y)$ for any $x$, $ y \in X$. We call $r$ the {\it similarity ratio}.
\end{itemize}
\end{definition}
For a finite set $\{F_{j}\}_{j \in S}$ of contractions defined on a complete metric space, there exists a unique compact subspace that is characterised by $\{F_{j}\}_{j \in S}$. Here is the precise statement of the existence of self-similar sets:
\begin{theorem}Let $X$ be a complete metric space. We also let $S$ be a finite set and $F_{i} : X \rightarrow X$ contractions indexed by $S$. We call the triple $(X, S, \{F_{j}\}_{j \in S})$ an {\rm iterated function system} or {\rm IFS}. Then, 
there exists a unique non-empty compact subset $K_{X}$ of $X$ that satisfies
$$K_{X} = \bigcup_{j \in S} F_{j}(K_{X}).$$
\end{theorem}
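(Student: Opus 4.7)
The plan is to invoke Banach's fixed point theorem on the space of non-empty compact subsets of $X$ equipped with the Hausdorff metric. Concretely, let $\mathcal{K}(X)$ denote the collection of non-empty compact subsets of $X$, and for $A, B \in \mathcal{K}(X)$ define
\[
d_{H}(A, B) = \max\Bigl\{ \sup_{a \in A} \inf_{b \in B} d(a, b),\ \sup_{b \in B} \inf_{a \in A} d(a, b) \Bigr\}.
\]
The first step is to recall the standard fact that $(\mathcal{K}(X), d_{H})$ is a complete metric space whenever $(X, d)$ is complete; the Cauchy sequences converge to the set of limits of Cauchy selections, and compactness of the limit follows from total boundedness inherited along the Cauchy sequence.

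Next, I would introduce the Hutchinson operator
\[
\mathcal{F} : \mathcal{K}(X) \rightarrow \mathcal{K}(X), \qquad \mathcal{F}(A) = \bigcup_{j \in S} F_{j}(A).
\]
This is well-defined because each $F_{j}$ is continuous (contractions are Lipschitz, hence continuous), so $F_{j}(A)$ is compact for every compact $A$, and a finite union of compact sets is compact; non-emptiness is preserved. The crucial calculation is that $\mathcal{F}$ is a contraction on $(\mathcal{K}(X), d_{H})$: using the elementary inequalities
\[
d_{H}(F_{j}(A), F_{j}(B)) \leq r_{j}\, d_{H}(A, B) \qquad \text{and} \qquad d_{H}\Bigl(\bigcup_{j} A_{j}, \bigcup_{j} B_{j}\Bigr) \leq \max_{j} d_{H}(A_{j}, B_{j}),
\]
one obtains $d_{H}(\mathcal{F}(A), \mathcal{F}(B)) \leq r \cdot d_{H}(A, B)$ with $r = \max_{j \in S} r_{j} \leq 1$. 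Strictly speaking, the statement allows $r_{j} = 1$ for individual contractions, so I would note that the theorem is usually stated under the stronger assumption $r_{j} < 1$ for each $j$ (which is the standard setting in \cite{kigami}); I would adopt this reading, giving a genuine contraction ratio $r < 1$.

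Applying the Banach fixed point theorem to $\mathcal{F}$ on the complete metric space $(\mathcal{K}(X), d_{H})$ yields a unique $K_{X} \in \mathcal{K}(X)$ with $\mathcal{F}(K_{X}) = K_{X}$, which is exactly the desired self-similar identity $K_{X} = \bigcup_{j \in S} F_{j}(K_{X})$. Uniqueness of the fixed point in $\mathcal{K}(X)$ gives uniqueness of $K_{X}$ among non-empty compact subsets of $X$. The main obstacle is the verification of completeness of $(\mathcal{K}(X), d_{H})$; since this is a standard result, I would either cite it from \cite{kigami} or sketch the argument that a Cauchy sequence $\{A_{n}\}$ converges to $A_{\infty} = \bigcap_{n} \overline{\bigcup_{m \geq n} A_{m}}$. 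The other estimates (sublinearity of $d_{H}$ under unions, Lipschitz behavior under $F_{j}$) are direct from the definitions and do not require delicate arguments.
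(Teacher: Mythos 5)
Your proposal follows essentially the same route as the paper's own (sketched) argument: the paper also equips the set $K(X)$ of non-empty compact subsets with the Hausdorff metric, cites its completeness (Lemma 2.8), observes that $F(A)=\bigcup_{j\in S}F_{j}(A)$ is a contraction on $(K(X),\delta)$, and invokes the contraction principle (Theorem 2.7). Your side remark about requiring $r_{j}<1$ is well taken, since the paper's Definition 2.4 permits $r\leq 1$, under which the Hutchinson operator need not be a genuine contraction; the standard strict inequality is the intended reading.
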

The compact set $K_{X}$ is called the {\it self-similar set with respect to} $(X, S, \{F_{j}\}_{j \in S})$. 
\begin{remark} In some literature the terminology ``self-similar set" is used in a restricted sense. For instance, Hutchinson introduces the notion of ``self-similar set" for a finite set of similitudes \cite{hutch}, and self-similar sets defined in Theorem 2.5 are referred to as {\it attractors} or {\it invariant sets}; see Section 9.1 in \cite{fal}. We employ Hutchinson's definition of self-similar sets in the last section to define cellular self-similar sets, the definition of which is given in Section $3.1$ below.
\end{remark}
For later use, we include an outline of a proof of the above theorem. The proof is based on the following theorem.
\begin{theorem}[Contraction principle]
Let $(X,d)$ be a complete metric space and $F: X \rightarrow X$ a contraction with respect to the metric. Then there exists a unique fixed point of $F$, in other words, there exists a unique solution to the equation $F(x) = x$. Moreover if $x_{*}$ is the fixed point of $F$, then $\{F^{n}(a)\}_{n \geq 0}$ converges to $x_{*}$ for all $a \in X$ where $F^{n}$ is the $n$-th iteration of $F$.
\end{theorem}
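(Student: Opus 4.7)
The plan is to execute the classical iteration argument of Banach. I would begin by noting that the conclusion genuinely requires the contraction ratio to satisfy $r < 1$ strictly (the definition as stated allows $r \leq 1$, but isometries show the claim fails at $r = 1$); so throughout the proof I take $r < 1$.

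Fix any $a \in X$ and set $x_n = F^n(a)$. The first step is to show $\{x_n\}$ is Cauchy. Iterating the contraction estimate gives
\[
d(x_n, x_{n+1}) = d(F^n(a), F^n(F(a))) \leq r^n \, d(a, F(a)),
\]
so for $m > n$ the triangle inequality yields
\[
d(x_n, x_m) \leq \sum_{k=n}^{m-1} r^k \, d(a, F(a)) \leq \frac{r^n}{1-r} \, d(a, F(a)),
\]
which tends to $0$ as $n \to \infty$.

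The second step is to invoke completeness of $(X,d)$: the sequence $\{x_n\}$ has a limit $x_* \in X$. Since contractions are Lipschitz (hence continuous), we can pass the map $F$ through the limit:
\[
F(x_*) = F\bigl(\lim_{n} x_n\bigr) = \lim_{n} F(x_n) = \lim_{n} x_{n+1} = x_*,
\]
so $x_*$ is a fixed point, and moreover the above argument applied to an arbitrary starting point $a$ shows simultaneously that every orbit $\{F^n(a)\}$ converges to $x_*$.

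The third step handles uniqueness. If $y$ also satisfies $F(y) = y$, then
\[
d(x_*, y) = d(F(x_*), F(y)) \leq r \, d(x_*, y),
\]
whence $(1-r)\, d(x_*, y) \leq 0$ and the strict bound $r < 1$ forces $d(x_*, y) = 0$. No step here is a genuine obstacle; the only subtlety worth flagging is the $r<1$ hypothesis implicit in the authors' use of the theorem (needed in Theorem 2.5 for the Hutchinson operator on the hyperspace of compact subsets, which in the subsequent proof would itself be a strict contraction in the Hausdorff metric).
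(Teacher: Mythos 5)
Your proof is the standard Banach fixed-point argument (Cauchy iterates via the geometric bound, completeness, continuity of $F$, and the $r<1$ uniqueness estimate), which is exactly the classical result the paper invokes here without proof, so it is correct and consistent with the paper's intent. Your remark that Definition 2.4 permits $r\leq 1$ while the theorem genuinely needs $r<1$ is a legitimate and worthwhile observation, since the Hutchinson operator on $(K(X),\delta)$ used in the proof of Theorem 2.5 is indeed a strict contraction only when the $F_{j}$ have ratios strictly less than $1$.
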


Let $(X,d)$ be a metric space and $K(X)$ the set of non-empty compact subsets of $X$. We define the {\it Hausdorff metric} $\delta$ on $K(X)$ by $$\delta(A,B) = \operatorname{inf}\{r > 0 : U_{r}(A) \subset B\ {\rm and}\ U_{r}(B) \subset A\},$$
where $U_{r}(A) = \{x \in X\ :\ d(x, A) \leq r\}$.
\begin{lemma}
The pair $(K(X), \delta)$ forms a metric space. Moreover, if $X$ is complete, $(K(X), \delta)$ is also complete.
\end{lemma}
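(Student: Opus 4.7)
The plan is to verify the four metric axioms first and then handle completeness separately by an explicit construction of the limit. Non-negativity and symmetry are immediate from the definition. For the identity of indiscernibles, I would use that compact subsets of a metric space are closed: if $\delta(A,B)=0$, then for every $\varepsilon>0$ one has $A\subset U_{\varepsilon}(B)$ and $B\subset U_{\varepsilon}(A)$, so $A\subset\bigcap_{\varepsilon>0}U_{\varepsilon}(B)=\overline{B}=B$ and vice versa. For the triangle inequality I would show the auxiliary statement: if $A\subset U_{r}(B)$ and $B\subset U_{s}(C)$, then $A\subset U_{r+s}(C)$; this follows directly from the triangle inequality in $X$ applied to $d(x,B)\leq r$ and $d(y,C)\leq s$. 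Symmetrising and taking infima gives $\delta(A,C)\leq \delta(A,B)+\delta(B,C)$.

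For completeness, assume $X$ is complete and let $\{A_{n}\}\subset K(X)$ be a Cauchy sequence. Define
$$A \ = \ \bigcap_{n=1}^{\infty}\overline{\bigcup_{k\geq n}A_{k}},$$
i.e.\ the set of all limits of convergent sequences $\{x_{n_{k}}\}$ with $x_{n_{k}}\in A_{n_{k}}$. To show $A$ is non-empty, pass to a subsequence (still denoted $A_{n}$) with $\delta(A_{n},A_{n+1})<2^{-n}$; for any $x_{1}\in A_{1}$ and inductively choose $x_{n+1}\in A_{n+1}$ with $d(x_{n},x_{n+1})<2^{-n}$, using that $A_{n}\subset U_{2^{-n}}(A_{n+1})$. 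The sequence $\{x_{n}\}$ is Cauchy in $X$, so it converges by completeness of $X$, and its limit lies in $A$.

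Next I would show $A$ is compact. It is closed by construction, so by completeness of $X$ it suffices to prove total boundedness. Given $\varepsilon>0$, choose $N$ with $\delta(A_{m},A_{n})<\varepsilon/2$ for all $m,n\geq N$; a direct check shows $A\subset U_{\varepsilon/2}(A_{N})$, and a finite $\varepsilon/2$-net of the compact set $A_{N}$ then yields a finite $\varepsilon$-net of $A$. Finally, to establish $A_{n}\to A$ in the Hausdorff metric, fix $\varepsilon>0$ and $N$ as above. For the inclusion $A_{n}\subset U_{\varepsilon}(A)$ with $n\geq N$, take $x\in A_{n}$ and mimic the inductive construction above, picking $x_{k}\in A_{n_{k}}$ with $n_{1}=n$, $x_{1}=x$, and $d(x_{k},x_{k+1})$ summable; the limit lies in $A$ and is within $\varepsilon$ of $x$. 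For the reverse inclusion $A\subset U_{\varepsilon}(A_{n})$, every $y\in A$ is approximated by some $y_{m}\in A_{m}$ with $m\geq n$ large, and $d(y_{m},A_{n})<\varepsilon/2$ by the Cauchy condition.

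The main technical obstacle is the Cauchy subsequence construction of a point of $A$ together with the verification that the candidate $A$ is really the Hausdorff limit: one must carefully control both directions of the Hausdorff inclusion simultaneously from the purely one-sided information provided by sequences $\{x_{n_{k}}\in A_{n_{k}}\}$. Once the interplay between the Cauchy condition on $\{A_{n}\}$ and the summable-step construction is set up, total boundedness of $A$ and the two Hausdorff inclusions follow by routine, if slightly delicate, $\varepsilon$-arguments.
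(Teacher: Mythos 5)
Your proof is correct, but there is nothing in the paper to compare it against: the paper states this lemma without proof, treating it as a standard fact (it is Theorem 1.1.5 and its surroundings in Kigami's \emph{Analysis on Fractals}, the reference the section follows). Your argument is the classical textbook one — metric axioms via the auxiliary inclusion $A\subset U_{r}(B)$, $B\subset U_{s}(C)\Rightarrow A\subset U_{r+s}(C)$, and completeness via the candidate limit $A=\bigcap_{n}\overline{\bigcup_{k\geq n}A_{k}}$, the summable-step Cauchy construction of points of $A$, total boundedness from $A\subset U_{\varepsilon/2}(A_{N})$ together with a finite net of $A_{N}$, and the two Hausdorff inclusions — and all the steps go through as you describe. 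One small remark: the paper's displayed definition reads $U_{r}(A)\subset B$ and $U_{r}(B)\subset A$, which is evidently a typo for $A\subset U_{r}(B)$ and $B\subset U_{r}(A)$; you have (correctly) worked with the intended definition, and it is worth flagging that discrepancy rather than silently fixing it.
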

We now assume that the metric space $(X, d)$ is complete. Define $  F(A) = \bigcup_{j \in S} F_{j}(A)$ for $A \subset X$, and then $F : K(X) \rightarrow K(X)$ is a contraction with respect to the metric $\delta$. Therefore, by applying Theorem 2.7 to $(K(X), \delta)$ and $F$, we get the self-similar set $K_{X}$ with respect to $(X, S, \{F_{j}\}_{j \in S})$.

We next define the Hausdorff dimension, which plays a key role to define cyclic cocycles on cellular self-similar sets, the definition of which are given in Section $3$ below.
\begin{definition}
Let $(X,d)$ be a metric space. We also let $s > 0$ and $\delta > 0$. For any bounded set $A \subset X$, we define
$$\mathcal{H}_{\delta}^{s}(A) = \inf \biggl\{ \sum_{i \geq 1} \operatorname{diam}(E_{i})^{s}\ :\ A \subset \bigcup_{i \geq1} E_{i},\ \operatorname{diam}(E_{i}) \leq \delta \biggr\}.$$
Here the infimum runs over all the coverings $\{ E_{i} \}$ of $A$, which consist of sets, and $\operatorname{diam}(E_{i})$ denotes the diameter of $E_{i}$.
Also we define $$  \mathcal{H}^{s}(A) = \operatornamewithlimits{limsup}_{\delta \downarrow 0}\mathcal{H}_{\delta}^{s}(A),$$
and we call $\mathcal{H}^{s}$ the $s${\it -dimensional Hausdorff measure} of $(X, d)$. 
\end{definition}
\begin{remark}
The $s$-dimensional Hausdorff measure is a complete Borel measure.
\end{remark}
The measure detects a critical point of the given subset.
\begin{lemma} For any subset $E \subset X$, we have
$$\sup \ \{ s \in \mathbb{R}\ |\ \mathcal{H}^{s}(E) = \infty \} = \inf \ \{ s \in \mathbb{R}\ |\ \mathcal{H}^{s}(E) = 0\ \}. $$
\end{lemma}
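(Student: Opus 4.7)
The plan is to show that the function $s \mapsto \mathcal{H}^{s}(E)$ has a single critical value $d_{0} \in [0, \infty]$ such that $\mathcal{H}^{s}(E) = \infty$ for every $s < d_{0}$ and $\mathcal{H}^{s}(E) = 0$ for every $s > d_{0}$. Once this ``jump'' behaviour is established, both $\sup\{s : \mathcal{H}^{s}(E) = \infty\}$ and $\inf\{s : \mathcal{H}^{s}(E) = 0\}$ are forced to coincide with $d_{0}$, yielding the desired equality.

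The technical core is the monotonicity estimate relating the $\delta$-approximate measures in different dimensions: for any $0 < s < t$ and any $\delta > 0$,
$$\mathcal{H}_{\delta}^{t}(E) \leq \delta^{t-s} \cdot \mathcal{H}_{\delta}^{s}(E).$$
This is immediate from Definition 2.9: for any covering $\{E_{i}\}$ of $E$ with $\operatorname{diam}(E_{i}) \leq \delta$, the elementary estimate $\operatorname{diam}(E_{i})^{t} = \operatorname{diam}(E_{i})^{t-s} \operatorname{diam}(E_{i})^{s} \leq \delta^{t-s} \operatorname{diam}(E_{i})^{s}$ holds; summing over $i$ and taking the infimum over all such coverings yields the inequality.

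Letting $\delta \downarrow 0$ in this inequality, one deduces that $\mathcal{H}^{s}(E) < \infty$ forces $\mathcal{H}^{t}(E) = 0$ for every $t > s$, and contrapositively $\mathcal{H}^{t}(E) > 0$ forces $\mathcal{H}^{s}(E) = \infty$ for every $s < t$. To conclude, set $d_{0} := \inf\{s \in \mathbb{R} : \mathcal{H}^{s}(E) = 0\}$. For $s > d_{0}$ one has $\mathcal{H}^{s}(E) = 0$ by the definition of the infimum together with the implication just obtained (pick some $s' < s$ with $\mathcal{H}^{s'}(E) = 0$, hence finite, and apply the implication with $s'$ and $s$). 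For $s < d_{0}$, every $s'$ with $s < s' < d_{0}$ satisfies $\mathcal{H}^{s'}(E) > 0$, so by the contrapositive $\mathcal{H}^{s}(E) = \infty$. Comparison with $\sup\{s : \mathcal{H}^{s}(E) = \infty\}$ then shows this supremum also equals $d_{0}$.

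I do not expect any substantial obstacle. The only subtlety worth watching is that $\mathcal{H}_{\delta}^{s}(E)$ is monotone non-decreasing as $\delta$ decreases, so the $\limsup_{\delta \downarrow 0}$ in Definition 2.9 is in fact a genuine limit, equal to $\sup_{\delta > 0} \mathcal{H}_{\delta}^{s}(E)$; this makes the limit passage $\delta \downarrow 0$ in the monotonicity estimate legitimate without further work.
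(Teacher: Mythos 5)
Your argument is correct, and it is the standard proof: the comparison $\mathcal{H}_{\delta}^{t}(E) \leq \delta^{t-s}\,\mathcal{H}_{\delta}^{s}(E)$ for $s<t$, the resulting jump of $s \mapsto \mathcal{H}^{s}(E)$ from $\infty$ to $0$ at a single critical value, and the identification of that value with both the supremum and the infimum in the statement. The paper itself gives no proof of this lemma (it is recalled as standard background from the fractal-geometry literature), so there is nothing to contrast with; your write-up, including the observation that $\mathcal{H}_{\delta}^{s}(E)$ increases as $\delta$ decreases so the $\limsup$ in the definition is a genuine limit, fills that gap correctly.
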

\begin{definition}
The real number which satisfies Lemma 2.11 is called the {\it Hausdorff dimension} of $E$, and it is denoted by $\dim_{H}(E)$. 
\end{definition}

In general, it is difficult to calculate the Hausdorff dimension. Namely, the Hausdorff dimensions of a few self-similar sets have been computed. However, if we have a self-similar set $K_{X}$ with respect to an IFS $(X, S, \{F_{j}\}_{j \in S})$ such that contractions are similitudes and the similitudes have ``small" enough intersections, then there exists a useful way to compute the Hausdorff dimension of $K_{X}$.
\begin{theorem} {\rm \cite[Theorem II]{mor}} \ Let $X$ be a compact subspace in $\mathbb{R}^{n}$ and $\{F_{j} : \mathbb{R}^{n} \rightarrow \mathbb{R}^{n}\}_{j \in S}$ a finite set of similitudes indexed with a finite set $S$. Suppose that the self-similar set $K_{X}$ with respect to the IFS $(X, S, \{F_{j}\}_{j \in S})$ satisfies the {\rm open set condition}, i.e., 
there exists a bounded non-empty open set $O \subset \mathbb{R}^{n}$ such that
$$\bigcup_{j \in S}F_{j}(O) \subset O\ \ \ and\ \ \ F_{i}(O) \cap F_{j}(O) = \emptyset \ \ for\ any\ i \neq j \in S.$$
Then the Hausdorff dimension $\dim_{H}(K_{X})$ of the self-similar set $K_{X}$ is the unique real number $\alpha$ such that the following relation holds
$$\sum_{j \in S} r_{j}^{\alpha} = 1.$$
Here $r_{j}$ denotes the similarity ratio of $F_{j}$.
\end{theorem}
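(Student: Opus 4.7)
The plan is to prove the theorem by sandwiching the Hausdorff dimension from above and below by the exponent $\alpha$ characterized by $\sum_{j \in S} r_{j}^{\alpha} = 1$. Uniqueness of such an $\alpha$ is immediate: the map $s \mapsto \sum_{j \in S} r_{j}^{s}$ is continuous and strictly decreasing on $(0, \infty)$ from $\#S$ to $0$, so it hits $1$ exactly once. Writing $F_{w} = F_{j_{1}} \circ \cdots \circ F_{j_{n}}$ and $r_{w} = r_{j_{1}} \cdots r_{j_{n}}$ for a word $w = (j_{1}, \ldots, j_{n}) \in S^{n}$, one has $K_{X} = \bigcup_{w \in S^{n}} F_{w}(K_{X})$ by iterating Theorem 2.5.

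For the upper bound, I would simply use the cover above. Each $F_{w}(K_{X})$ has diameter $r_{w} \cdot \operatorname{diam}(K_{X})$, so taking $n$ large enough that $r_{w} \cdot \operatorname{diam}(K_{X}) \leq \delta$ for all $w \in S^{n}$,
\[
\mathcal{H}_{\delta}^{\alpha}(K_{X}) \leq \sum_{w \in S^{n}} (r_{w} \operatorname{diam}(K_{X}))^{\alpha} = \operatorname{diam}(K_{X})^{\alpha} \Bigl(\sum_{j \in S} r_{j}^{\alpha}\Bigr)^{n} = \operatorname{diam}(K_{X})^{\alpha}.
\]
Letting $\delta \downarrow 0$ gives $\mathcal{H}^{\alpha}(K_{X}) \leq \operatorname{diam}(K_{X})^{\alpha} < \infty$, hence $\dim_{H}(K_{X}) \leq \alpha$.

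For the lower bound I would pass through a mass distribution argument. Construct the self-similar probability measure $\mu$ on $K_{X}$ determined by $\mu = \sum_{j \in S} r_{j}^{\alpha}\, (F_{j})_{*}\mu$, equivalently $\mu(F_{w}(K_{X})) = r_{w}^{\alpha}$ for every finite word $w$; this exists by the Kolmogorov extension theorem because the open set condition forces the overlaps $F_{i}(K_{X}) \cap F_{j}(K_{X})$ to be $\mu$-null. The goal is the uniform estimate $\mu(B(x, r)) \leq C r^{\alpha}$ for $x \in \mathbb{R}^{n}$ and $r > 0$ small, which together with the mass distribution principle yields $\mathcal{H}^{\alpha}(K_{X}) \geq \mu(K_{X})/C > 0$ and therefore $\dim_{H}(K_{X}) \geq \alpha$.

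The hard part is the combinatorial estimate behind this ball-mass bound, which is where the open set condition is actually used. Fix the open set $O$ from the hypothesis and set $r_{\min} = \min_{j} r_{j}$. For each word $w$ with $r_{\min} r \leq r_{w} \operatorname{diam}(O) < r$ such that $F_{w}(\overline{O})$ meets $B(x, r)$, the sets $F_{w}(O)$ are pairwise disjoint (by the open set condition propagated to level $|w|$), each contains a ball of radius comparable to $r$ (a scaled copy of some fixed ball in $O$), and all lie inside a ball of radius $O(r)$ around $x$. A volume comparison in $\mathbb{R}^{n}$ then bounds the number $N$ of such words by a constant depending only on the IFS. Since $K_{X} \cap B(x, r)$ is covered by the corresponding $F_{w}(K_{X})$, we obtain $\mu(B(x, r)) \leq N \cdot \max_{w} r_{w}^{\alpha} \leq C r^{\alpha}$, which completes the proof. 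Controlling this bounded-multiplicity count cleanly, and verifying that $\mu$ really charges each cylinder by $r_{w}^{\alpha}$ despite the boundary overlaps, is the step I expect to require the most care.
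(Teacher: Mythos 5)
The paper does not actually prove this statement: Theorem 2.13 is imported verbatim from Moran \cite{mor} (his Theorem II), so there is no internal argument to compare yours against. Your proposal is the standard modern proof (essentially Hutchinson's, in the form presented by Falconer), and its skeleton is sound: the upper bound $\dim_{H}(K_{X}) \leq \alpha$ via the covering $\{F_{w}(K_{X})\}_{w \in S^{n}}$ is complete as written and does not even use the open set condition, while the lower bound correctly isolates the two genuinely delicate points, namely the bounded-multiplicity count for the stopping-time family of words with $r_{\min} r \leq r_{w}\operatorname{diam}(O) < r$, and the behaviour of the self-similar measure on cylinder overlaps. One simplification worth making: you do not need to verify that $\mu(F_{w}(K_{X})) = r_{w}^{\alpha}$ exactly, i.e.\ that the overlaps are $\mu$-null. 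If you define $\mu$ as the pushforward under the coding map $\pi : S^{\mathbb{N}} \rightarrow K_{X}$ of the Bernoulli measure $\nu$ with weights $r_{j}^{\alpha}$, then $\pi^{-1}(B(x,r))$ is contained in the union of the cylinders $[w]$ over the stopping-time words $w$ whose pieces meet the ball, so $\mu(B(x,r)) \leq \sum_{w} \nu([w]) = \sum_{w} r_{w}^{\alpha} \leq C r^{\alpha}$ follows directly; the open set condition then enters only in the volume-packing bound on the number of such $w$, which is where it belongs. With that adjustment the argument closes cleanly and matches the result the paper is quoting.
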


\begin{example}
In this example we give examples of self-similar sets and their dimensions. For later use, we explain contractions of each self-similar set and give an IFS $(X, S, \{F_{j}\}_{j \in S})$ that gives rise to the self-similar set. We also provide figures for each self-similar set, that correspond to $X$, $  F(X) (= \bigcup_{j \in S} F_{j}(X))$ and $F \circ F(X)$.\\

$\bullet$ {\bf Sierpinski gasket}\\
The Sierpinski gasket $SG$ is a well-known examplesof self-similar sets. Here are the first $3$ steps of the construction of the Sierpinski gasket:
\begin{center}
  \includegraphics[bb=0 0 1057 292, width = 140mm ]{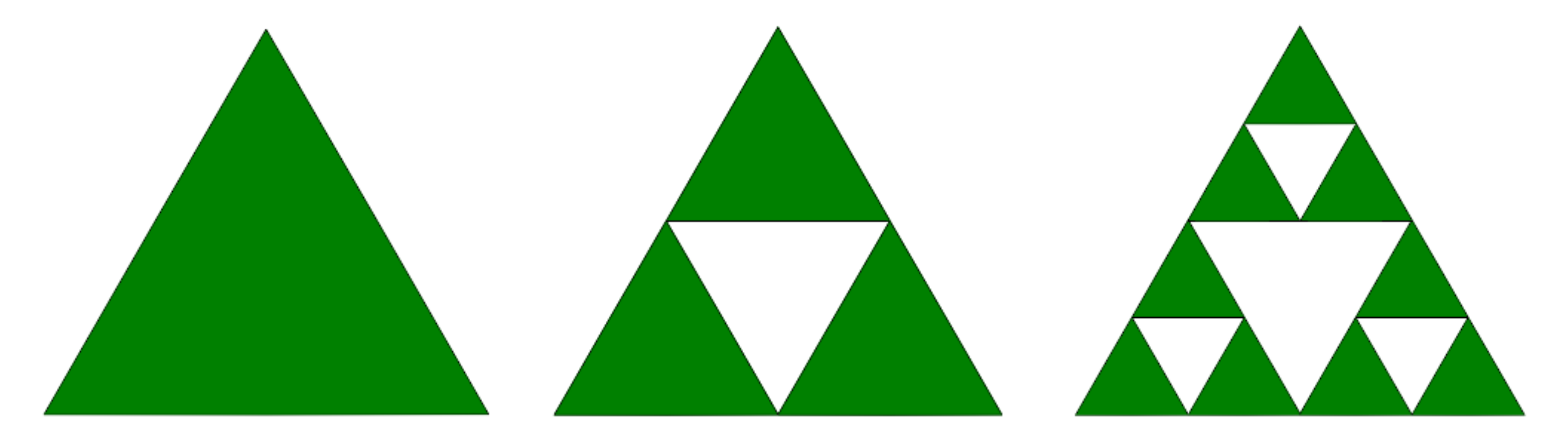}
\end{center}
The space of the left-hand side $X$ is an equilateral triangle in $\mathbb{R}^{2}$. In the centre we have $3$ equilateral triangles, the length of whose edges are a half of the ones of $X$. The similitudes $F_{1}$, $F_{2}$ and $F_{3}$ are defined by the $3$ triangles, and the similarity ratios of $F_{j}$ are $\frac{1}{2}$. The right-hand side is the space $F \circ F(X)$. Then, we get an IFS $(X, S = \{1, 2, 3\}, \{F_{j}\}_{j \in S})$, and it gives rise to $SG$. Moreover, $SG$ satisfies the open set condition. Namely, we can choose an open set $O = \operatorname{int}(X)$, and we find that $\bigcup_{j \in S} F_{j}(O) \subset O$ and $F_{i}(O) \cap F_{j}(O) = \emptyset$ for any $i \neq j \in S$. Therefore, the Hausdorff dimension of $SG$ is the root $\alpha$ given by the equation $\sum_{j \in S}$$(\frac{1}{2})^{\alpha} = 3 \cdot (\frac{1}{2})^{\alpha} = 1$, i.e., $\dim_{H}(SG) = \log_{2}3$.\\

$\bullet$ {\bf Pinwheel fractal}\\ 
The Pinwheel fractal $PW$ is a self-similar set which is modeled by the pinwheel tiling of the plane. There exist uncountably many pinwheel tilings, and therefore we have self-similar sets following them. 
\begin{center}
  \includegraphics[bb = 0 0 595 113, width = 130mm]{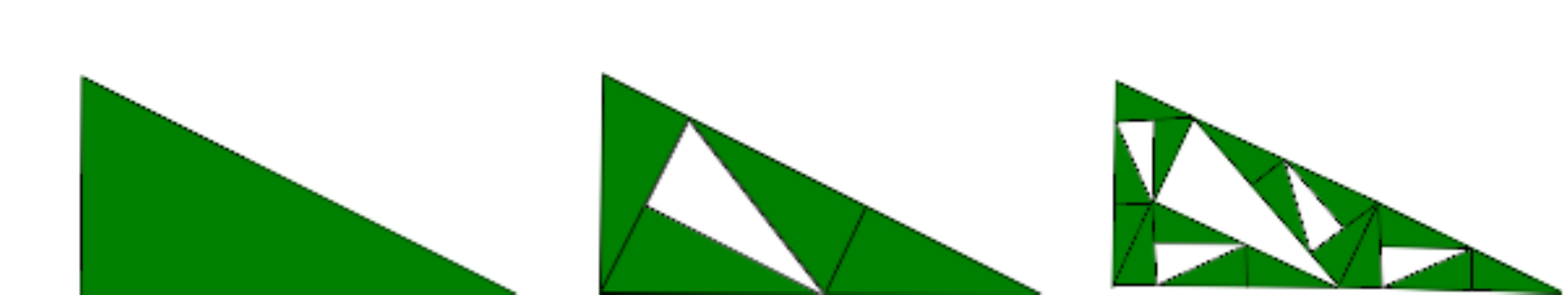}
\end{center}
The figure is one of the pinwheel fractals based on the most well-known pinwheel tiling of $\mathbb{R}^{2}$. 
The left triangle consists of $3$ edges whose lengths are $1$, $2$ and $\sqrt{5}$, and we have $4$ similitudes whose similarity ratios are $\frac{1}{\sqrt{5}}$. Therefore, we get an IFS $(X, S = \{1, \cdots, 4\}, \{F_{j}\}_{j \in S})$ which gives rise to $PW$. Since $PW$ satisfies the open set condition, the Hausdorff dimension of the pinwheel fractal is given by the root of the equation $  \sum_{j \in S}$$(\frac{1}{\sqrt{5}})^{\alpha} = 4 \cdot (\frac{1}{\sqrt{5}})^{\alpha} = 1$, i.e., $\dim_{H}(PW) = \log_{\sqrt{5}}4$.\\

$\bullet$ {\bf Infinite Sierpinski gasket}\\
Here we give a non-connected self-similar set based on the Sierpinski gasket. 
\begin{center}
  \includegraphics[bb = 0 0 1057 297, width = 140mm]{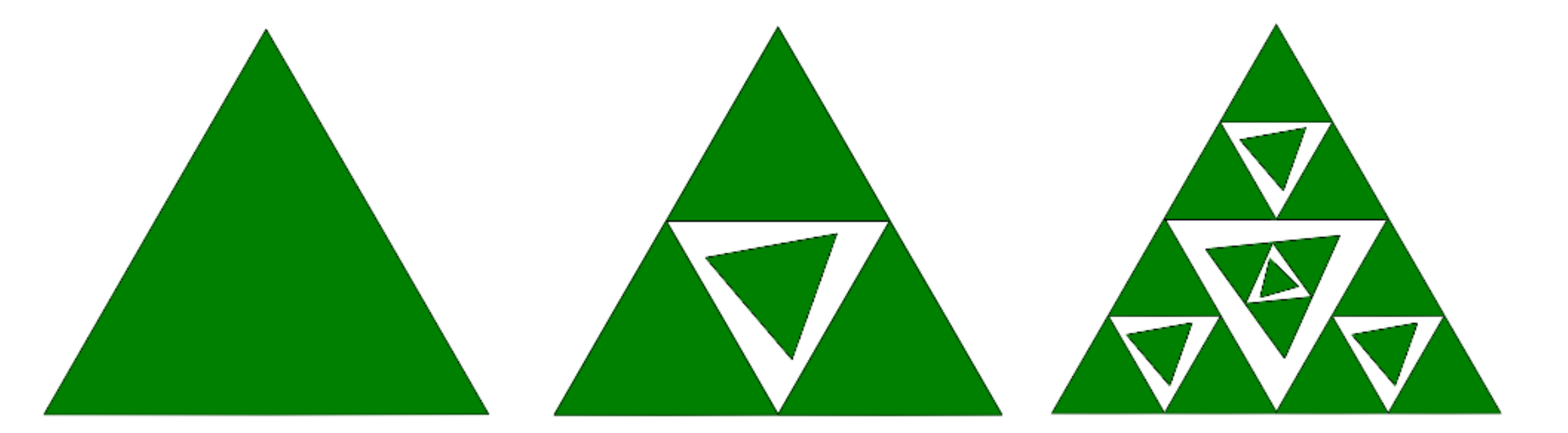}  
\end{center}
The row represents the first $3$ iterations of an IFS that consists of $4$ similitudes, one of which has the similarity ratio $\frac{1}{3}$ and the rest has $\frac{1}{2}$. The resulting cellular self-similar set consists of infinitely many countable connected components.

\end{example}
\section{Main Theorem}
In this section we define cyclic cocycles on certain subclass of self-similar sets and prove the main theorems. From now on, self-similar sets are assumed to be in $\mathbb{R}^{2}$.
\subsection{Cellular Self-similar Structures}
First we define the kinds of self-similar sets on which we define cyclic cocycles. 
\begin{definition} Let $|X|$ be a $2$-dimensional finite convex linear cell complex and  $\{F_{j}\}_{j \in S}$ a set of similitudes $F_{j} : |X| \rightarrow |X|$ indexed by a finite set $S$. We also let $  |X_{1}| = \bigcup_{j \in S} F_{j}(|X|)$. The triple $(|X|, S, \{F_{j}\}_{j \in S})$ is called a {\it cellular self-similar structure} if it satisfies
\begin{itemize}
\item[a)] $\partial|X| \subset \partial |X_{1}|$, and
\item[b)] $\operatorname{int}F_{i}(|X|) \cap \operatorname{int}F_{j}(|X|) = \emptyset $, for all $ i \neq j \in S$.
\end{itemize}
\end{definition}
By Theorem $2.5$ we have a unique self-similar set $K_{|X|}$ with respect to the cellular self-similar structure $(X, S, \{F_{j}\}_{j \in S})$ and we call $K_{|X|}$ the {\it cellular self-similar set with respect to} $(|X|, S, \{F_{j}\}_{j \in S})$. By construction, $K_{|X|}$ is a compact subset of $|X| \subset \mathbb{R}^{2}$.

\begin{lemma} Any cellular self-similar structure $(|X|, S, \{F_{j}\}_{j \in S})$ satisfies the open set condition. 
\end{lemma}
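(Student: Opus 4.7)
The plan is to exhibit the required open set explicitly by taking $O = \operatorname{int}(|X|)$, and then to verify the two defining conditions of the open set condition directly from the axioms a) and b) of a cellular self-similar structure together with the fact that each $F_j$, being a similitude of the ambient $\mathbb{R}^2$, is a homeomorphism onto its image.

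First, I would observe that $O = \operatorname{int}(|X|)$ is non-empty and bounded: it is non-empty because $|X|$ is a $2$-dimensional convex linear cell complex in $\mathbb{R}^2$ (hence contains a $2$-cell with non-empty interior), and it is bounded because $|X|$ itself is compact. Next, each similitude $F_j$ extends to (or rather, is the restriction of) a similitude of $\mathbb{R}^2$, which is a homeomorphism of $\mathbb{R}^2$. Consequently $F_j$ sends open subsets of $\mathbb{R}^2$ to open subsets of $\mathbb{R}^2$, and in particular
\[
F_j(O) = F_j(\operatorname{int}|X|) = \operatorname{int}(F_j(|X|)).
\]
Condition b) of Definition 3.1 therefore translates directly into $F_i(O) \cap F_j(O) = \emptyset$ for all $i \neq j$ in $S$.

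It remains to verify $\bigcup_{j \in S} F_j(O) \subset O$. The set $\bigcup_{j \in S} F_j(O)$ is open in $\mathbb{R}^2$ as a union of open sets. Moreover, since each $F_j$ maps $|X|$ into $|X|$, we have
\[
\bigcup_{j \in S} F_j(O) \;\subset\; \bigcup_{j \in S} F_j(|X|) \;=\; |X_1| \;\subset\; |X|.
\]
Any open subset of $\mathbb{R}^2$ contained in $|X|$ is contained in $\operatorname{int}(|X|) = O$, so this chain of inclusions gives $\bigcup_{j \in S} F_j(O) \subset O$, as required.

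No real obstacle is expected: the argument is essentially a direct translation of the two defining axioms a) and b) into the language of the open set condition, once one notices that a similitude of $\mathbb{R}^2$ is a global homeomorphism (so that images of open sets are open and interiors are preserved). The only subtlety worth emphasising is that axiom a) of Definition 3.1 is not used for this lemma itself but only guarantees that $|X_1|$ fills up $|X|$ along the boundary; the open set condition follows purely from axiom b) together with the containment $|X_1| \subset |X|$ that is already implicit in the assumption $F_j : |X| \to |X|$.
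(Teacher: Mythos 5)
Your proof is correct and is exactly the argument the paper intends: the paper's own proof consists of the single sentence that the lemma ``follows immediately from the definition,'' and the choice $O=\operatorname{int}(|X|)$ you make explicit is the same one the paper uses when verifying the open set condition for the Sierpinski gasket in Example 2.14. Your observations that a similitude is a global homeomorphism of $\mathbb{R}^2$ (so images of interiors are interiors) and that axiom a) is not needed are accurate and fill in the details the paper omits.
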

\begin{proof}
The lemma follows immediately from the definition of cellular self-similar structures.
\end{proof}
Let  $(|X|, S, \{F_{j}\}_{j \in S})$ be a cellular self-similar structure. For any $n \in \mathbb{N}$, we define a cell complex $|X_{n}|$ as follows: first, for $\omega = (j_{1}, \cdots, j_{n}) \in S^{\times n}$, we write 
$$F_{\omega} = F_{j_{1}} \circ \cdots \circ F_{j_{n}}.$$
We define $|X_{n}|$ by the following skelton filtration:
\begin{itemize}
\item $  sk_{0}(|X_{n}|) = \bigcup_{\omega \in S^{\times n}}F_{\omega}(sk_{0}(|X|))$,
\item $  sk_{1}(|X_{n}|) = \bigcup_{\omega \in S^{\times n}}F_{\omega}(sk_{1}(|X|))$,  
\item $  sk_{2}(|X_{n}|) = \bigcup_{\omega \in S^{\times n}}F_{\omega}(sk_{2}(|X|)) = \bigcup_{\omega \in S^{\times n}}F_{\omega}(|X|)$.
\end{itemize}
A $1$-cell in $|X_{n}|$ is defined to be the closure of a connected component in $sk_{1}(|X_{n}|) - sk_{0}(|X_{n}|)$. The definition of a cellular self-similar structure yields $$|X_{n+1}| =\bigcup_{j \in S}F_{j}\hspace{1pt}( \bigcup_{\omega \in S^{\times n}}F_{\omega}(|X|)) = \bigcup_{j \in S} F_{j}(|X_{n}|),$$
and therefore we have an inclusion map $i_{n, n+1} : |X_{n+1}| \hookrightarrow |X_{n}|$ for every $n \in \mathbb{Z}_{\geq 0}$. Moreover $K_{|X|}$ is written as the inverse limit of inclusion maps $\{i_{n, n+1} : |X_{n+1}| \hookrightarrow |X_{n}|\}$, that is, 
$$K_{|X|} = \bigcap_{n=1}^{\infty}|X_{n}|.$$
Therefore we also have a canonical inclusion map $i_{n} : K_{|X|} \hookrightarrow |X_{n}|$ for each $n \in \mathbb{Z}_{\geq 0}$.

For a $n \in \mathbb{N}$ and a $1$-cell $|\sigma|$ in $\partial |X_{n}|$, we define $E_{\sigma}^{n}
$ to be the set of $1$-cells of $|X_{n+1}|$ which are subspaces of $|\sigma|$. Then, we have $$|\sigma| = \bigcup_{|\tau| \in E_{\sigma}^{n}} |\tau|.$$
\begin{lemma}\label{lemma:1}
There exists $M \in \mathbb{N}$ that satisfy the following condition: for any $n \in \mathbb{N}$ and a $1$-cell $|\sigma|$ in $\partial |X_{n}|$ we have $\# E_{\sigma}^{n} \leq M$. 
\end{lemma}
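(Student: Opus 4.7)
The plan is to define a candidate for $M$ at the ``ground level'' and then transport the bound to each level $n$ using the self-similar structure. Concretely, I set
$$M = \max_{|\tau_{0}|} \#\{|\tau_{1}| : |\tau_{1}| \text{ is a } 1\text{-cell of } |X_{1}| \text{ with } |\tau_{1}| \subset |\tau_{0}|\},$$
where the maximum runs over boundary $1$-cells $|\tau_{0}|$ of $|X|$, and I aim to show $\#E_{\sigma}^{n} \leq M$ for every $n$ and every boundary $1$-cell $|\sigma|$ of $|X_{n}|$.

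First I would identify a unique word $\omega \in S^{n}$ with $|\sigma| \subset F_{\omega}(|X|)$. This rests on the following observation: since $|\sigma| \subset \partial |X_{n}|$, an interior point $p$ of $|\sigma|$ cannot lie in two distinct cells $F_{\omega}(|X|)$ and $F_{\omega''}(|X|)$ with $\omega \neq \omega'' \in S^{n}$. For otherwise, by the open set condition of Lemma 3.2, $p$ would lie on the edge they share, but that shared edge has $F_{\omega}(|X|)$ on one side and $F_{\omega''}(|X|)$ on the other, both subsets of $|X_{n}|$, putting $p$ in $\operatorname{int}(|X_{n}|)$ and contradicting $p \in \partial |X_{n}|$. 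Since $\operatorname{int}|\sigma|$ is connected and $p \mapsto \omega$ is locally constant, this $\omega$ is uniquely determined. As $|\sigma|$ is $1$-dimensional and sits in $\partial |X_{n}|$, it cannot be contained in $\operatorname{int}F_{\omega}(|X|)$, so in fact $|\sigma| \subset F_{\omega}(\partial |X|)$. Pulling back by $F_{\omega}^{-1}$ places $|\sigma'| := F_{\omega}^{-1}(|\sigma|)$ inside a single boundary $1$-cell $|\tau_{0}|$ of $|X|$, because any $0$-cell of $|X|$ that $F_{\omega}^{-1}(|\sigma|)$ crossed in its interior would produce a $0$-cell of $|X_{n}|$ in the interior of the $1$-cell $|\sigma|$.

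Next I would show that every new $0$-cell of $|X_{n+1}|$ in $\operatorname{int}|\sigma|$ arises from a word $\omega' \in S^{n+1}$ of the form $(\omega, k)$, $k \in S$. Indeed such a point lies in some $F_{\omega'}(|X|) \subset F_{\omega''}(|X|)$ where $\omega''$ is the length-$n$ prefix of $\omega'$; by the uniqueness argument above, $\omega''$ must equal $\omega$. Applying $F_{\omega}^{-1}$ identifies the new interior $0$-cells in $|\sigma|$ with the interior $0$-cells of $|X_{1}|$ lying on $|\sigma'| \subset |\tau_{0}|$. The number of such $0$-cells is at most the number of interior $0$-cells of $|X_{1}|$ on $|\tau_{0}|$, which equals $M_{|\tau_{0}|} - 1 \leq M - 1$, where $M_{|\tau_{0}|}$ is the count used to define $M$. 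Adding the two endpoints of $|\sigma|$ gives at most $M+1$ zero-cells of $|X_{n+1}|$ in $|\sigma|$, hence at most $M$ subsegments, i.e., $\#E_{\sigma}^{n} \leq M$.

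The main technical obstacle is securing the uniqueness of $\omega$ in the first step. A priori the boundary of $|X_{n}|$ could meet several different images $F_{\omega''}(|X|)$, and this must be ruled out carefully using Definition 3.1 b) together with the fact that $|\sigma|$ is a full $1$-dimensional cell on $\partial |X_{n}|$ rather than merely a single $0$-cell. Once the uniqueness is established, the remainder is a direct application of the self-similar rescaling $F_{\omega}^{-1}$ and an elementary $1$-dimensional count of cells on a line segment.
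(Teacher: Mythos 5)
Your proof is correct and follows essentially the same route as the paper's: identify the unique word $\omega \in S^{\times n}$ with $|\sigma| \subset F_{\omega}(\partial|X|)$, pull back by $F_{\omega}^{-1}$ to the level-one picture, and bound the subdivision by a constant read off from how $|X_{1}|$ subdivides the boundary $1$-cells of $|X|$. The only difference is one of detail: the paper simply asserts the uniqueness of $\omega$ and quotes the crude bound $\#S$, whereas you justify the uniqueness via the interior-disjointness condition and take $M$ to be the exact level-one maximum, which is a more careful version of the same argument.
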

\begin{proof} For every $1$-cell $|\sigma|$ in $\partial |X_{n}|$, there exists a unique $\omega \in S^{\times n}$ and a unique $1$-cell $|\tilde{\sigma}|$ in $F_{\omega}(|X|)$ such that $|\sigma| \subset |\tilde{\sigma}|$. Since $|X_{n+1}|$ is obtained by replacing each $2$-cell $F_{\omega}(|X|)$ by $  F_{\omega}(|X_{1}|) = F_{\omega}(\bigcup_{j \in S} F_{j}(|X|))$, $|\tilde{\sigma}|$ is subdivided by at most $\# S$ $2$-cells. This completes the proof of the lemma.
\end{proof}

Now, since $|X_{n}|$ is a convex linear cell complex, we can associate an abstract simplicial complex $X^{s}_{n}$ by employing a lemma in \cite{zeeman}:
\begin{lemma} {\rm \cite[Chapter I, Lemma 1]{zeeman}}
A convex linear cell complex can be subdivided into a simplicial complex without introducing any more vertices.  
\end{lemma}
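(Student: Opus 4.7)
\medskip
\noindent\textbf{Proof plan.} The plan is to prove the lemma by induction on the dimension $n$ of the convex linear cell complex $|X|$, together with a fixed total order $\prec$ on the vertex set $V(X)$ that serves to make all choices consistent across cells.

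The base cases $n = 0$ and $n = 1$ are immediate, since $0$-cells are points and $1$-cells are line segments, both of which are already simplices. For the inductive step I first apply the inductive hypothesis to the $(n-1)$-skeleton $\mathrm{sk}_{n-1}(X)$, obtaining a simplicial subdivision $T$ of $\mathrm{sk}_{n-1}(X)$ that uses only the original vertices. Then, for each $n$-cell $C$ of $|X|$, let $v_C$ be the $\prec$-minimum vertex of $V(C)$. The boundary $\partial C$ is triangulated by the restriction $T|_{\partial C}$. I triangulate $C$ by keeping all simplices of $T|_{\partial C}$ that contain $v_C$ and adjoining the cone $v_C \ast \sigma$ for each simplex $\sigma \in T|_{\partial C}$ that does not contain $v_C$. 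Since $C$ is convex and $v_C \in C$, each such cone is a genuine geometric simplex lying in $C$, and since $C$ is star-shaped from $v_C$ the union of these cones together with the retained boundary simplices exhausts $C$. No new vertices are introduced at any stage, since the only vertex used in the coning is $v_C \in V(X)$.

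The main obstacle is to verify two consistency conditions: first, that any two adjacent $n$-cells produce triangulations that agree on their common face; and second, that within a single $n$-cell $C$ the cones $v_C \ast \sigma$ for different $\sigma$ intersect along common faces, so that the resulting collection really is a simplicial complex rather than merely a union of simplices. The first condition is ensured by the order of the induction, because $T$ is fixed on the entire $(n-1)$-skeleton before any $n$-cell is processed; hence the triangulations of $\partial C$ and $\partial C'$ automatically agree on $\partial C \cap \partial C'$. For the second, the key identity $(v_C \ast \sigma) \cap (v_C \ast \tau) = v_C \ast (\sigma \cap \tau)$ relies essentially on the convexity of $C$ (star-shapedness from $v_C$), and is the technical heart of the argument; it is the standard coning calculation that is used repeatedly in subdivision theory and is what requires the cell to be convex rather than merely a ball.
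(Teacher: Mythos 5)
The paper does not prove this lemma at all: it is imported verbatim from Zeeman (\cite[Chapter I, Lemma 1]{zeeman}) and used as a black box to produce the subdivisions $|X_n^s|$ and $|K_{n,n+1}^s|$. Your argument is correct and is in substance Zeeman's own proof: fix a total order on the vertices, subdivide skeleta in order of increasing dimension, and triangulate each cell $C$ by coning from its $\prec$-minimal vertex $v_C$ over the induced triangulation of the union $P$ of proper faces of $C$ not containing $v_C$. Two small points worth making explicit if you write this up in full: (i) the cones $v_C \ast \sigma$ you adjoin for $\sigma$ lying in a face that \emph{does} contain $v_C$ are already simplices of the boundary triangulation (because $v_C$ is also the $\prec$-minimal vertex of every face of $C$ containing it, so those faces were themselves coned from $v_C$), hence your collection coincides with $v_C \ast (T|_P)$; and (ii) the intersection identity $(v_C\ast\sigma)\cap(v_C\ast\tau)=v_C\ast(\sigma\cap\tau)$ follows from the fact that each $\sigma$ lies in a supporting hyperplane of $C$ missing $v_C$, so the radial projection from $v_C$ is injective on $P$ — this is the precise use of convexity, slightly more than star-shapedness, and you correctly isolate it as the technical heart.
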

For any simplicial complex $|X^{s}_{n}|$ and $p \geq 0$ we define $S_{p}(X_{n}^{s})$ to be a set of $(p+1)$-tuples of points of $sk_{0}(X_{n}^{s})$ such that $(p+1)$ vertices are contained in a simplex of $X_{n}^{s}$, that is, 
$$S_{p}(X_{n}^{s}) = \Bigl\{ (x_{0}, \cdots, x_{p}) \in sk_{0}(X_{n}^{s})^{\times (p+1)}\ |\ {\rm there\ exists\ a}\ p{\rm \mathchar`-simplex}\ \sigma \in X_{n}^{s}\ s.t.\ x_{i} \in \sigma \ {\rm for}\ \forall i \ \Bigr\}.$$
We also define face maps $\sigma_{i} : S_{p}(X^{s}_{n}) \rightarrow S_{p-1}(X^{s}_{n})$ for $0 \leq i \leq p$, and the pair $(S_{*}(X^{s}_{n}), \sigma_{i})$ forms a semi-simplicial set; see the definition \cite{ez}.
We note that, for $p \geq 1$, $S_{p}(X_{n}^{s})$ contains a {\it degenerate simplex} $(x_{0}, \cdots, x_{p})$, that is, a simplex $(x_{0}, \cdots, x_{p}) \in S_{p}(X^{s}_{n})$ such that there exist distinct indexes $i$ and $j$ such that $x_{i} = x_{j}$. 
Now, we define $\tilde{S}_{p}(X_{n}^{s}; \mathbb{C})$ to be the free $\mathbb{C}$-module generated by $S_{p}(X_{n}^{s})$ and a map $\tilde{\partial}_{p} : \tilde{S}_{p}(X_{n}^{s}; \mathbb{C}) \rightarrow \tilde{S}_{p-1}(X_{n}^{s}; \mathbb{C})$ by 
$$\tilde{\partial}_{p}(x_{0}, \cdots, x_{p}) = \sum_{j=0}^{p}(-1)^{j} \sigma_{i}(x_{0}, \cdots, x_{p}) = \sum_{j=0}^{p}(-1)^{j}(x_{0}, \cdots, \hat{x}_{j}, \cdots, x_{p}).$$ Then we have a commutative diagram:
\[\xymatrix{
\tilde{S}_{p}(X_{n}^{s}; \mathbb{C}) \ar[d]_{\pi} \ar[r]^{\tilde{\partial}_{p}} & \tilde{S}_{p-1}(X_{n}^{s}; \mathbb{C}) \ar[d]^{\pi} \\
C_{p}(X_{n}^{s}; \mathbb{C})  \ar[r]_{\partial_{p}}& C_{p-1}(X_{n}^{s}; \mathbb{C}), \\
}\]
where $C_{p}(X_{n}^{s}; \mathbb{C})$ is the $p^{\rm th}$ simplicial chain group of $X_{n}^{s}$ whose coefficient is $\mathbb{C}$, $\partial_{p}$ a simplicial boundary map and $\pi$ the quotient map. 
\begin{remark}
The chain map $\pi$ is a chain equivalence; see Theorem $8$ in Chapter 4.3 of \cite{spanier} for details.
\end{remark}

We now assign the counterclockwise orientation on each $2$-simplex in every $|X_{n}^{s}|$, and choose a basis $B_{n} = \{[\sigma]\}$ of $C_{2}(X_{n}^{s}; \mathbb{C})$ consisting of non-degenerate $p$-simplexes $\sigma$ in $X_{n}^{s}$. We assume that each element $[\sigma]$ of $B_{n}$ represents the counterclockwise orientation. 

Now, we define simplicial chains for every $n \in \mathbb{Z}_{\geq 0}$:
let 
$$c_{n} = \sum_{[\sigma] \in B_{n}} [\sigma] \in C_{2}(X_{n}^{s}; \mathbb{C}).$$ 
Then $\partial_{2}(c_{n}) \in C_{1}(X_{n}^{s}; \mathbb{C})$ is the sum of all $1$-simplices which lie on $\partial |X_{n}|$, and we can choose $s_{n} \in \pi^{-1}(c_{n})$ so that $s_{n}$ has no degenerate simplexes and each summand of $\tilde{\partial}_{2}(s_{n}) \in \tilde{S}_{1}(X_{n}^{s}; \mathbb{C})$ lies on $\partial|X_{n}|$. Now we define a boundary chain $ b_{n} \in \tilde{S}_{1}(X_{n}^{s}; \mathbb{C})$ by 
\begin{itemize}
\item $b_{n} = \tilde{\partial}_{2}(s_{n})$.
\end{itemize}
We first let $\epsilon(b_{n})$ be the subset of $1$-simplices in $S_{1}(X_{n}^{s})$ which are direct summands of $b_{n}$. Since any $\sigma \in \epsilon(b_{n})$ is non-degenerate, we can take the geometric realisation $|\sigma| \subset \partial |X_{n}^{s}|$. We also define a subset $\epsilon(o_{n}) \subset \epsilon(b_{n})$ by  
$$\epsilon(o_{n}) = \Bigl\{\sigma \in \epsilon(b_{n})\ |\ |\sigma| \subset \partial |X|\Bigr\}.$$
For each $\sigma \in \epsilon(o_{n})$, we have the sign of $\sigma$ in $b_{n}$ and denote it by $\operatorname{sgn}(\sigma)$. Define 
\begin{itemize}
\item $  o_{n} =\sum_{\sigma \in \epsilon(o_{n})} \operatorname{sgn}(\sigma) \cdot \sigma \in \tilde{S}_{1}(X_{n}^{s}; \mathbb{C})$, 
\item $I_{n} = b_{n} - o_{n} \in \tilde{S}_{1}(X_{n}^{s}; \mathbb{C})$.
\end{itemize}
Let $\epsilon(I_{n}) = \epsilon(b_{n}) \backslash \epsilon(o_{n})$. We also define $|\epsilon(I_{n})| = \bigcup_{\sigma \in \epsilon(I_{n})} |\sigma|$, and $\epsilon(I_{n}\backslash I_{n-1})$ in a manner similar to $\epsilon(o_{n})$:
$$\epsilon(I_{n}\backslash I_{n-1}) = \Bigl\{ \sigma \in \epsilon(b_{n})\ |\ |\sigma| \subset \overline{|\epsilon(I_{n})| \backslash |\epsilon(I_{n-1})|}\ \Bigr\}.$$
Finally we define a $1$-chain by 
\begin{itemize}
\item $  I_{n}\backslash I_{n-1} = \sum_{\sigma \in \epsilon(I_{n} \backslash I_{n-1})} \operatorname{sgn}(\sigma) \cdot \sigma \in \tilde{S}_{1}(X_{n}^{s}; \mathbb{C})$.
\end{itemize}
\begin{example}\ \\
For the Sierpinski gasket and the pinwheel fractal introduced in Section 2.2, we give spaces that represent $\epsilon(b_{0})$, $\epsilon(b_{1})$, $\epsilon(b_{2})$, and $\epsilon(I_{0})$, $\epsilon(I_{1})$, $\epsilon(I_{2})$. The first row corresponds to $\epsilon(b_{i})$, and the second corresponds to $\epsilon(I_{i})$. The dots in spaces denote the vertices of $1$-simplices, i.e., $0$-simplices.\\

\begin{figure}[htbp]
$\bullet$ Sierpinski gasket
\begin{center}
  \includegraphics[bb = 0 0 890 300, width = 130mm]{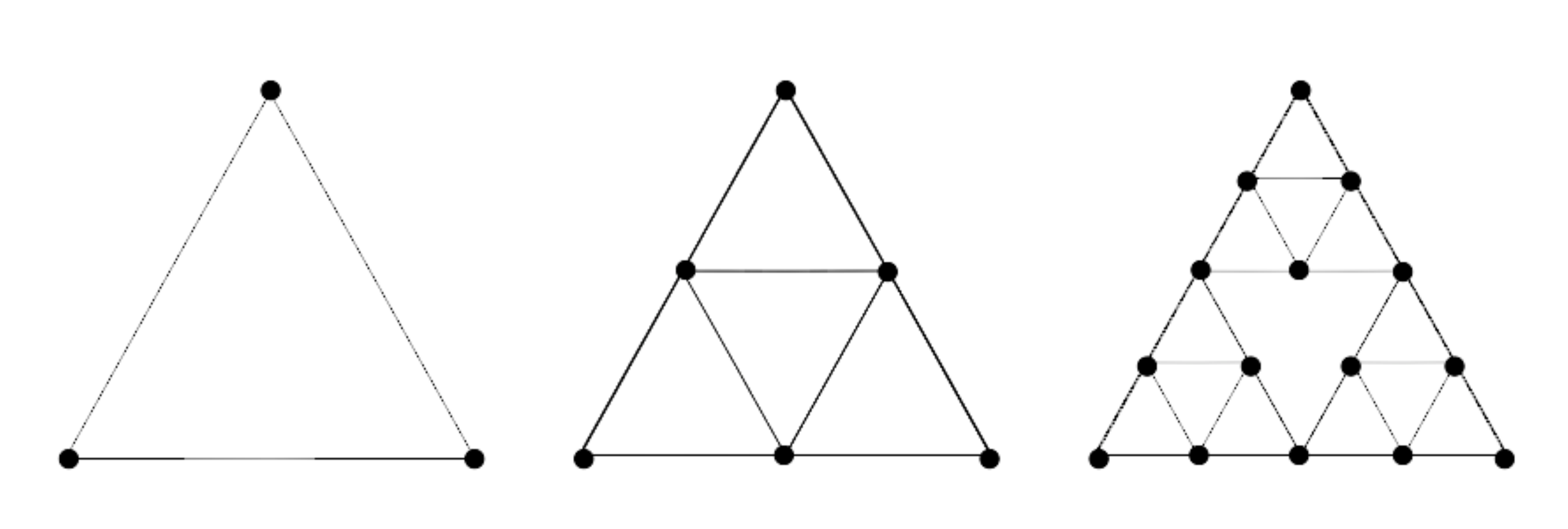}
\end{center}
\end{figure}
\begin{figure}[htbp]
\begin{center}
  \includegraphics[bb = 0 0 890 300, width = 130mm]{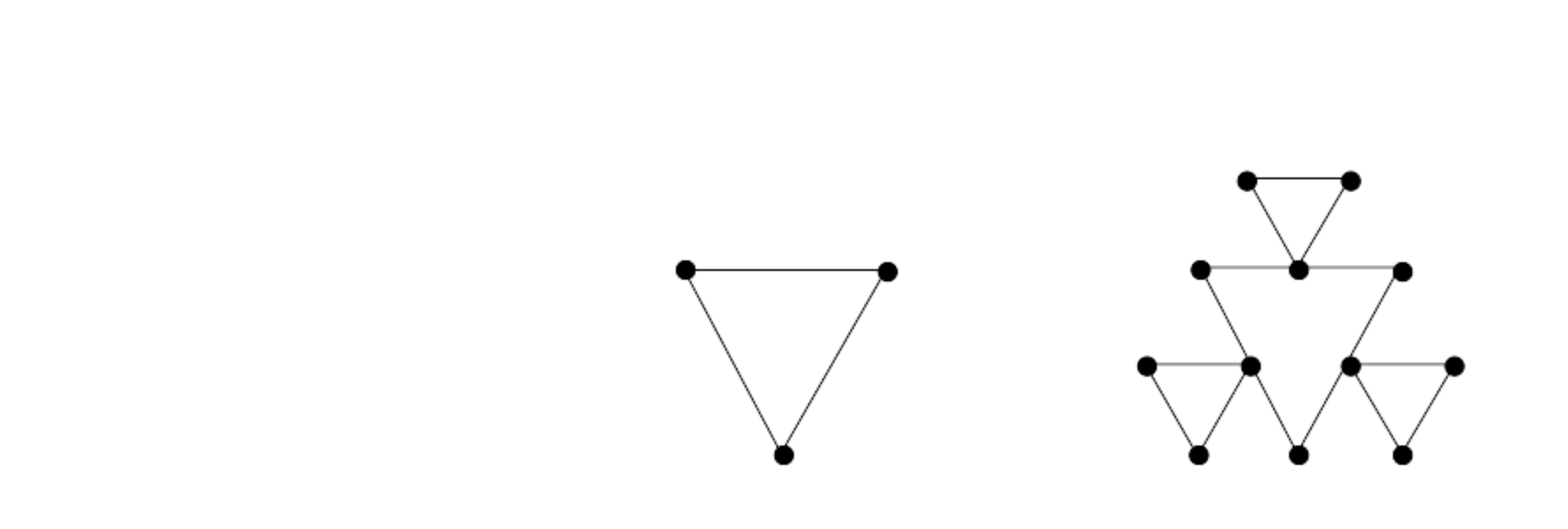}
\end{center}

\end{figure}

\begin{figure}[htbp]
$\bullet$ Pinwheel fractal
\begin{center}
  \includegraphics[bb = 0 0 655 128, width = 140mm]{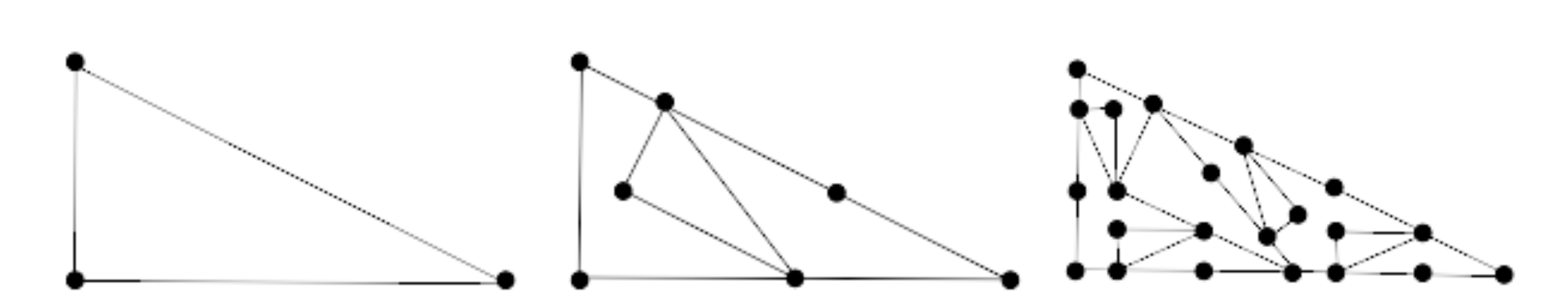}
  \includegraphics[bb = 0 0 655 128, width = 140mm]{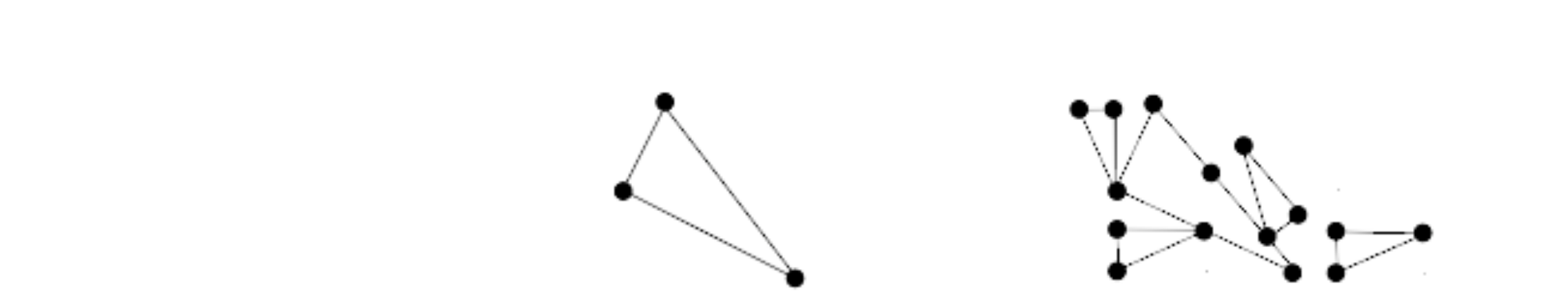}
\end{center}
\end{figure}

\end{example}

\ 

\

Next, for every $n \in \mathbb{Z}_{\geq 0}$, we define a $2$-dimensional cell complex $|K_{n,n+1}|$. 
For every $n \in \mathbb{Z}_{\geq 0}$ we endow
$$|X_{n,n+1}| = \overline{|X_{n}| - |X_{n+1}|}\ (= {\rm the\ closure\ of\ }|X_{n}| - |X_{n+1}|), $$ 
with a cell complex structure, whose structure is defined by the following skelton filtration:
\begin{itemize}
\item $sk_{0}(|X_{n, n+1}|) = sk_{0}(\partial |X_{n+1}|) \cap |X_{n, n+1}|$
\item $sk_{1}(|X_{n, n+1}|) = \partial |X_{n, n+1}|$
\item $sk_{2}(|X_{n, n+1}|) = |X_{n, n+1}|$
\end{itemize}
We also define a subspace $|K_{n,n+1}|$ in $\mathbb{R}^{3}$ to be 
$$|K_{n,n+1}| = [0,1] \times \partial|X_{n+1}| \cup \{1\} \times |X_{n,n+1}|.$$
We use $z$ as the variable of the first coordinate of $|K_{n,n+1}|$.
We now endow $|K_{n, n+1}|$ with a $2$-dimensional cell complex structure as follows: let $p_{1} : |K_{n,n+1}| \rightarrow |K_{n,n+1}||_{z = 1}$ be a projection defined by $p_{1}(t,x) = (1,x)$. We define

\begin{itemize}
\item $sk_{0}(|K_{n,n+1}|) = \{0\} \times sk_{0}(\partial|X_{n+1}|) \cup \{1\} \times (sk_{0}(\partial|X_{n}|) \cup sk_{0}(|X_{n, n+1}|) )$
\item $sk_{1}(|K_{n,n+1}|) = \{0\} \times sk_{1}(\partial|X_{n+1}|) \cup \{1\} \times (sk_{1}(\partial|X_{n}|) \cup sk_{1}(\partial|X_{n,n+1}|) \cup |E_{n, n+1}|)$
\item $sk_{2}(|K_{n,n+1}|) = |K_{n,n+1}|$.
\end{itemize}
Here, 
$$|E_{n, n+1}| = \Bigl\{(x,y)\ |\ x \in \{1\} \times sk_{0}(\partial|X_{n}|)\ {\rm or}\ x \in \{1\} \times sk_{0}(|X_{n, n+1}|),$$ $$\hspace{2.3cm} y \in \{0\} \times sk_{1}(\partial|X_{n+1}|)\ {\rm such\ that} \ p_{1}(x) = y \Bigr\}.$$By construction of $|K_{n,n+1}|$, we have 
$$
\partial |K_{n,n+1}| = \{0\} \times \partial|X_{n+1}| \cup \{1\} \times \partial|X_{n}|$$
as a cell complex in $\mathbb{R}^{3}$. By employing Lemma 3.4 again, the cell complex $|K_{n,n+1}|$ is subdivided into a $2$-dimensional simplicial complex $|K_{n,n+1}^{s}|$, and we may therefore choose chains $s_{n, n+1}$, $\tilde{s}_{n, n+1}$ and $\tilde{\tilde{s}}_{n, n+1} \in \tilde{S}_{2}(K_{n,n+1}^{s}; \mathbb{C})$ so that the chains consist of non-degenerate simplexes and 
$$\tilde{\partial}_{2}(s_{n, n+1}) = b_{n} - b_{n+1},\ \ \ 
\tilde{\partial}_{2}(\tilde{s}_{n, n+1}) = I_{n} - I_{n+1},\ \ \ 
\tilde{\partial}_{2}(\tilde{\tilde{s}}_{n, n+1}) = I_{n+1}\backslash I_{n}.$$
We define the sets $\epsilon(s_{n, n+1})$, $\epsilon(\tilde{s}_{n, n+1})$ and $\epsilon(\tilde{\tilde{s}}_{n, n+1})$ in a manner similar to the definition of $\epsilon(b_{n})$, and assume that $\tilde{s}_{n, n+1}$ and $\tilde{\tilde{s}}_{n, n+1}$ are  summands of $s_{n, n+1}$, in other words, 
$$\epsilon(\tilde{s}_{n, n+1}),\ \epsilon(\tilde{\tilde{s}}_{n, n+1}) \subset \epsilon(s_{n, n+1}).$$ 

By a closed cycle $z$ in $I_{n+1}\backslash I_{n}$ we mean a subset $z$ of $\epsilon(I_{n+1} \backslash I_{n})$ such that $\bigcup_{\sigma \in z} |\sigma|$ is homomorphic to $S^{1}$, and denote $  \bigcup_{\sigma \in z} |\sigma|$ by $|z|$. 
We also denote by $\operatorname{cyc}(I_{n+1} \backslash I_{n})$ the set of closed cycles in $I_{n+1} \backslash I_{n}$ and define $  \tilde{z} = \sum_{\sigma \in z} \operatorname{sgn}(\sigma) \cdot \sigma \in \tilde{S}_{1}(K_{n, n+1}^{s}; \mathbb{C})$ for $z \in \operatorname{cyc}(I_{n+1}\backslash I_{n})$. Then, for every closed cycle $z$ in $I_{n+1}\backslash I_{n}$, there exists a non-degenerate $2$-chain $\tilde{\tilde{s}}_{z} \in \tilde{S}_{2}(K_{n,n+1}^{s}; \mathbb{C})$ such that $\tilde{\partial}_{2}(\tilde{\tilde{s}}_{z}) = \tilde{z}$.

For $n = 0$ we define 
$$|\tilde{K}_{0,1}| = [0, 1] \times \partial(|X| - |X_{1}|) \cup \{1\} \times |X_{0, 1}|$$ 
and then $|\tilde{K}_{0, 1}|$ is written as 
$$|\tilde{K}_{0, 1}| = \bigcup_{z \in \operatorname{cyc}(I_{1} \backslash I_{0})} |\epsilon(\tilde{\tilde{s}}_{z})|$$
since $\partial(|X| - |X_{1}|) = |\epsilon(I_{1} \backslash I_{0})|$.
Moreover, since, 
for every $\omega \in S^{\times n}$, we have an inclusion map 
$i_{\omega} : \partial(|X| - |X_{1}|) \hookrightarrow F_{\omega}(\partial |X_{1}|)$, there exists a family $\{ \tilde{i}_{\omega}\}_{\omega \in S^{\times n}}$ of inclusion maps $\tilde{i}_{\omega} : |\tilde{K}_{0,1}| \hookrightarrow |K_{n,n+1}|$ such that $$\tilde{i}_{\omega}|_{z = 0} = i_{\omega}.$$ Finally we fix a subdivision of $|\tilde{K}_{0,1}|$ and 
assume that the subdivision of the images of the inclusion maps are given by the subdivision of $|\tilde{K}_{0,1}|$.




\subsection{Cyclic Quasi-$1$-cocycle}
In this subsection, we define a sequence of complex numbers for given H\"{o}lder continuous functions, that we call a cyclic quasi-$1$-cocycle. In order to define the sequence, we first recall a cochain complex which gives rise to one of the classical cohomology theories in algebraic topology, so called Alexander-Spanier cohomology theory; see Chapter 6 of \cite{spanier} for details.

Let $R$ be a ring. We also let $X$ be a set and $X^{(p+1)}$ the $(p+1)$-fold product of $X$. We define $F^{p}(X;R)$ to be the abelian group of functions from $X^{(p+1)}$ to $R$, whose sum is given by the pointwise sum. A coboundary homomorphism $\delta : F^{p}(X; R) \rightarrow F^{p+1}(X; R)$ is defined by
$$(\delta \phi)(x_{0}, \cdots, x_{p+1}) = \sum_{j=0}^{p+1}(-1)^{j}\phi(x_{0}, \cdots, \hat{x_{j}}, \cdots, x_{p+1}).$$
We also introduce the cup product on the complex $(F^{*}(X;R), \delta )$: for $\phi_{1} \in F^{p}(X; R)$ and $\phi_{2} \in F^{q}(X; R)$ the cup product $\phi_{1} \smile \phi_{2} \in F^{p+q}(X; R)$ is defined by
$$(\phi_{1} \smile \phi_{2})(x_{0}, \cdots, x_{p+q}) = \phi_{1}(x_{0}, \cdots, x_{p})\phi_{2}(x_{p}, \cdots, x_{p+q}).$$
The Leibniz rule holds for the cup product: for $\phi_{1} \in F^{p}(X; R)$ and $\phi_{2} \in F^{q}(X; R)$,

$$\delta(\phi_{1} \smile \phi_{2}) = \delta \phi_{1} \smile \phi_{2} + (-1)^{p} \phi_{1} \smile \delta \phi_{2}.$$


Now, we define a cochain subcomplex of $(F^{*}(X;R), \delta)$: we assume that $X$ is a metric space and $R$ the field of complex numbers $\mathbb{C}$. We also let $C^{\alpha}(X)$ be the algebra of complex-valued $\alpha$-H\"{o}lder continuous functions on $X$. Then, $C^{\alpha}(X)$ is a subalgebra of $F^{0}(X;\mathbb{C})$, and for each $p \in \mathbb{Z}_{ \geq 0}$ we define the submodule $C^{\alpha, p}(X)$ of $F^{p}(X;\mathbb{C})$ generated by $C^{\alpha}(X) \subset F^{0}(X;\mathbb{C})$ with the coboundary maps and the cup product. 

We now apply the construction for a cellular self-similar structure $(|X|, S, \{F_{j}\}_{j \in S})$: let $C^{\alpha}(K_{|X|})$ be the $\alpha$-H\"{o}lder continuous functions defined on $K_{|X|}$. For each $n \in \mathbb{N}$, we endow $sk_{0}(|X_{n}|)$ with the induced metric of $\mathbb{R}^{2}$. Since we have an inclusion map $j_{n} : sk_{0}(|X_{n}|) \hookrightarrow K_{|X|}$ for every $n \in \mathbb{Z}_{\geq 0}$, we have a commutative diagram of cochain complexes 
\[\xymatrix{
F^{p}(K_{|X|}; \mathbb{C}) \ar[r]^{j_{n}^{*}\ \ } & F^{p}(sk_{0}(|X_{n}^{s}|); \mathbb{C}) \\
C^{\alpha, p}(K_{|X|}) \ar@{^{(}-_>}[u] \ar[r]_{j_{n}^{*}\ \ }& C^{\alpha, p}(sk_{0}(|X_{n}^{s}|)) \ar@{^{(}-_>}[u] \\
}\]
The vector space $F^{p}(sk_{0}(|X_{n}^{s}|); \mathbb{C})$ is the set of complex-valued functions $\operatorname{Func}({S}_{p}(\Delta^{\# sk_{0}(|X_{n}^{s}|)}), \mathbb{C})$ defined on ${S}_{p}(\Delta^{\# sk_{0}(|X_{n}^{s}|)}) : = sk_{0}(|X_{n}^{s}|)^{\times p+1}$. In a manner similar to the definition of the face maps $\sigma_{i}$ of $S_{p}(X_{n}^{s})$, we define the face maps on $S_{*}(\Delta^{\# sk_{0}(|X_{n}^{s}|)})$, and then the pair $(S_{*}(\Delta^{\# sk_{0}(|X_{n}^{s}|)}), \sigma_{i})$ turns out to be a semi-simplicial set, the definition of which is in \cite{ez}, also known as the fundamental $\infty$-groupoid of $sk_{0}(|X_{n}^{s}|)$. Since the inclusion map $S_{*}(|X_{n}^{s}|) \hookrightarrow {S}_{*}(\Delta^{\# sk_{0}(|X_{n}^{s}|)})$ is a map of semi-simplicial sets, 
we therefore get the following commutative diagram: 
\[\xymatrix{
F^{p}(K_{|X|}; \mathbb{C}) \ar[r]^{j_{n}^{*}\ \ } & F^{p}(sk_{0}(|X_{n}^{s}|); \mathbb{C})  \ar[r]^{{\rm extend}\ \ \ \ \ \ \ \ \ \  }_{{\rm linearly}\ \ \ \ \ \ \ \ \ } & \operatorname{Hom}_{\mathbb{C}}(\tilde{S}_{p}(\Delta^{\# sk_{0}(|X_{n}|)}; \mathbb{C}), \mathbb{C}) \ar[d]^{{\rm restrict}} \\
C^{\alpha, p}(K_{|X|}) \ar@{^{(}-_>}[u] \ar[r]_{j_{n}^{*}\ \ }& C^{\alpha, p}(sk_{0}(|X_{n}^{s}|) \ar@{^{(}-_>}[u] \ar@{^{(}-_>}[u] \ar[r]_{r} & \operatorname{Hom}_{\mathbb{C}}(\tilde{S}_{p}(|X_{n}^{s}|; \mathbb{C}), \mathbb{C}). \\
}\]

Now we define $C^{\alpha, p}(|X_{n}^{s}|) = \operatorname{im}(r)$.
For any $f$, $g \in C^{\alpha}(K_{|X|})$ and $p = 1$, we have a $1$-cochain $\omega_{n}(f,g) = (f \smile \delta g) - (g \smile \delta f)$ in $C^{\alpha, 1}(|X_{n}^{s}|)$ for every $n \in \mathbb{N}$. As we define in Section 3.1, we also have $I_{n} \in \tilde{S}_{1}(|X_{n}^{s}|;\mathbb{C})$. For every $n \in \mathbb{N}$, we have a complex number $\omega_{n}(f, g)(I_{n})$ and denote it by $\phi_{n}(f,g)$. 

\begin{definition} Let $f$, $g \in C^{\alpha}(K_{|X|})$. We call the sequence $\{\phi_{n}(f,g)\}_{n \in \mathbb{N}}$ the {\it cyclic quasi-1-cocycle} for $f$ and $g$. 
\end{definition}

\subsection{Non-trivial Cyclic $1$-cocycles}
We prove the main results in this subsection. We refer the reader to \cite{con1, con2} for details of the Hochschild cohomology groups and the cyclic cohomology groups. 

\begin{theorem}[{\bf Existence theorem}]
Let $(|X|, S, \{F_{j}\}_{j \in S})$ be a cellular self-similar structure with $\#S \geq 2$ and $K_{|X|}$ the cellular self-similar set with respect to $(|X|, S, \{F_{j}\}_{j \in S})$. We also let $C^{\alpha}(K_{|X|})$ be the algebra of complex-valued $\alpha$-H\"{o}lder continuous functions on $K_{|X|}$. If $2 \alpha > \operatorname{dim}_{H}(K_{|X|})$, then the cyclic quasi-1-cocycle $\{\phi_{n}(f,g)\}_{n \in \mathbb{N}}$ is a Cauchy sequence for any $f$, $g \in C^{\alpha}(K_{|X|})$.
\end{theorem}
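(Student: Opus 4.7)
The plan is to prove the Cauchy property by showing that consecutive differences $\phi_n(f,g)-\phi_{n+1}(f,g)$ decay geometrically in $n$. The mechanism combines the Leibniz rule for the Alexander--Spanier cup product with the self-similar scaling of diameters, and then converts everything into an estimate governed by Moran's equation. Set $\omega = f\smile\delta g - g\smile\delta f$. Since $\delta^2=0$, the Leibniz rule collapses $\delta\omega$ to
\[
\delta\omega = \delta f\smile\delta g - \delta g\smile\delta f,
\]
so on a $2$-simplex $(x_0,x_1,x_2)$ one has
\[
(\delta\omega)(x_0,x_1,x_2)=\bigl(f(x_1)-f(x_0)\bigr)\bigl(g(x_2)-g(x_1)\bigr)-\bigl(g(x_1)-g(x_0)\bigr)\bigl(f(x_2)-f(x_1)\bigr).
\]
Using the $2$-chain $\tilde s_{n,n+1}\in\tilde S_2(K_{n,n+1}^s;\mathbb C)$ with $\tilde\partial_2(\tilde s_{n,n+1})=I_n-I_{n+1}$, the duality pairing yields
\[
\phi_n(f,g)-\phi_{n+1}(f,g)=\omega(\tilde\partial_2 \tilde s_{n,n+1})=(\delta\omega)(\tilde s_{n,n+1}).
\]

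Next I would bound each summand using $\alpha$-H\"older continuity. Writing $[f]_\alpha,[g]_\alpha$ for the H\"older seminorms of $f,g$ and $\ell(\tau)$ for the diameter of a $2$-simplex $\tau$,
\[
|(\delta\omega)(\tau)|\le 2\,[f]_\alpha[g]_\alpha\,\ell(\tau)^{2\alpha}.
\]
By construction $|K_{n,n+1}^s|=\bigcup_{\omega\in S^{\times n}}F_\omega(|K_{0,1}^s|)$, so every $2$-simplex in $\tilde s_{n,n+1}$ lies inside some $F_\omega(|K_{0,1}^s|)$; being the image of one of the finitely many $2$-simplices of $|K_{0,1}^s|$ under a similitude of ratio $r_\omega=r_{j_1}\cdots r_{j_n}$, its diameter is at most $r_\omega\,\ell_0$, where $\ell_0$ is the maximum $1$-simplex length in $|K_{0,1}^s|$. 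Letting $N_0$ be the number of $2$-simplices of $\tilde s_{0,1}$, this gives
\[
|\phi_n(f,g)-\phi_{n+1}(f,g)|\le 2N_0[f]_\alpha[g]_\alpha\,\ell_0^{2\alpha}\sum_{\omega\in S^{\times n}} r_\omega^{2\alpha}=2N_0[f]_\alpha[g]_\alpha\,\ell_0^{2\alpha}\Bigl(\sum_{j\in S}r_j^{2\alpha}\Bigr)^{n}.
\]
Finally, by Lemma 3.2 the cellular self-similar structure satisfies the open set condition, so Theorem 2.13 (Moran) identifies $d=\dim_H(K_{|X|})$ as the unique root of $\sum_{j\in S}r_j^s=1$. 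The function $s\mapsto\sum_j r_j^s$ is strictly decreasing since each $r_j<1$, so $2\alpha>d$ forces $\sum_j r_j^{2\alpha}<1$; consequently $|\phi_n-\phi_{n+1}|$ is majorised by a geometric progression and $\{\phi_n(f,g)\}$ is Cauchy.

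The principal obstacle is the geometric bookkeeping behind the self-similar scaling estimate: one must verify that the simplicial subdivision of $|K_{n,n+1}|$ furnished by Lemma 3.4 really does decompose compatibly with the similitudes $F_\omega$, so that $2$-simplices actually inherit the contraction ratios $r_\omega$. This is where Lemma \ref{lemma:1} becomes essential, since the bound $\#E_\sigma^n\le M$ is what prevents the number of simplices in each self-similar copy from blowing up and keeps $N_0$ finite and uniform. Once this structural fact is established, the analytic core of the argument reduces to the H\"older estimate above combined with Moran's identity.
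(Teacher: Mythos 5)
Your overall architecture coincides with the paper's: telescope via the $2$-chain $\tilde s_{n,n+1}$ in $|K_{n,n+1}^{s}|$, use the coboundary identity $\delta\omega=\delta f\smile\delta g-\delta g\smile\delta f$, bound each $2$-simplex by $2c_fc_g\,\ell(\tau)^{2\alpha}$, and invoke Moran's equation to get $\sum_{j}r_j^{2\alpha}<1$ from $2\alpha>\dim_H(K_{|X|})$. The analytic core is right. But the structural identity you lean on, $|K_{n,n+1}^{s}|=\bigcup_{\omega\in S^{\times n}}F_\omega(|K_{0,1}^{s}|)$, is false, and it is exactly the step you yourself flag as ``the principal obstacle'' without resolving it. The region $|K_{n,n+1}|=[0,1]\times\partial|X_{n+1}|\cup\{1\}\times|X_{n,n+1}|$ contains the cylinder over those $1$-cells of $\partial|X_{n}|$ that persist into $\partial|X_{n+1}|$; these $1$-cells are subdivided by the new vertices of $|X_{n+1}|$ in a pattern that depends on $n$ and on the cell, so the resulting $2$-simplices are \emph{not} similitude images of a fixed finite list of simplices of $|K_{0,1}^{s}|$, and your single uniform constant $N_0$ is not justified. (Only the part of $|K_{n,n+1}|$ attached to the new lacunae $I_{n+1}\backslash I_{n}$ is genuinely self-similar, via the inclusions $\tilde i_\omega:|\tilde K_{0,1}|\hookrightarrow|K_{n,n+1}|$.)

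The paper closes this gap with a two-part decomposition of $\epsilon(s_{n,n+1})$. The lacuna part $\epsilon(\tilde{\tilde s}_{n,n+1})$ is counted as you intend: $L$ closed cycles per word $\omega$, at most $\overline M$ simplices per cycle, all carried by $\tilde i_\omega$ so diameters scale by $r_\omega$. The remaining simplices are handled differently: each has projection $p_1(|\sigma|)\subset\partial F_\omega(|X|)$ for a unique $\omega$, which gives the diameter bound $\operatorname{diam}(|\sigma|)=\operatorname{diam}(p_1(|\sigma|))\le r_{j_1}\cdots r_{j_n}d_{K_{|X|}}$ \emph{only because} the metric on $|K_{n,n+1}|$ is chosen degenerate in the cylinder direction, $d((t,x),(t',x'))=|x-x'|$ --- a choice you never make but implicitly need; and the fibre count $\#\rho^{-1}(\omega)<M$ comes from Lemma~3.3's bound on the number of subdivision pieces of each boundary $1$-cell, not from self-similarity. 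The final constant is $M+L\overline M$ in place of your $N_0$, after which your Moran argument goes through verbatim. So the gap is localized and fixable, but as written the count and the scaling of the non-lacuna simplices are unproven.
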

\begin{proof}
We first endow $|K_{n, n+1}|$ with a metric by $d((t,x), (t', x')) = |x-x'|_{\mathbb{R}^{2}}$. Let $f$, $g \in C^{\alpha}(K_{|X|})$. Since we have an inclusion map $sk_{0}(|X_{n+1}|) \hookrightarrow K_{|X|}$ for every $n \in \mathbb{Z}_{\geq 0}$, we can extend $f$ to $f_{n} \in C^{\alpha}(sk_{0}(|K_{n, n+1}|))$ so that $f_{n}(t,x) = f(x)$ for $(t, x) \in [0, 1] \times \partial |X_{n+1}^{s}|$. We also let, for $h$, $k \in C^{\alpha}(sk_{0}(|K_{n, n+1}|))$, $\omega_{n}(h, k) = (h \smile \delta k) - (k \smile \delta h)$ be a $1$-cochain in $C^{\alpha, 1}(|K_{n, n+1}^{s}|)$. Then, we have 
\begin{eqnarray} \nonumber
|\phi_{n}(f,g) - \phi_{n+1}(f,g)| &=& |\omega_{n}(f_{n}, g_{n})(I_{n} - I_{n+1})|\\ \nonumber
&=& |\omega_{n}(f_{n}, g_{n})(\tilde{\partial}_{2}(\tilde{s}_{n, n+1}))| \\ \nonumber
&\leq& |\omega_{n}(f_{n}, g_{n})(\tilde{\partial}_{2}(\tilde{s}_{n, n+1}))| + |\omega_{n}(f_{n}, g_{n})(\tilde{\partial}_{2}(s_{n, n+1} - \tilde{s}_{n, n+1}))| \\ \nonumber
&\leq& \sum_{\sigma \in \epsilon(s_{n, n+1})} |\omega_{n}(f_{n}, g_{n})(\tilde{\partial}_{2}(\sigma))| \\ 
&=&  \sum_{\sigma \in \epsilon(s_{n, n+1})} |(\delta f_{n} \smile \delta g_{n})(\sigma) - (\delta g_{n} \smile \delta f_{n})(\sigma)|.
\end{eqnarray} 
We note that every $\sigma \in \epsilon(s_{n, n+1})$ is given by $\sigma = (x, y, z)$ for some $x$, $y$, $z \in sk_{0}(|K_{n, n+1}|)$. Therefore, $(1)$ may be written as
\begin{eqnarray}\nonumber
(1) \ \ \ &=& \sum_{(x,y,z) \in \epsilon(s_{n, n+1})} |(\delta f_{n} \smile \delta g_{n})(x,y,z) - (\delta g_{n} \smile \delta f_{n})(x,y,z)| \\ \nonumber 
&=&  \sum_{(x,y,z) \in \epsilon(s_{n, n+1})} \Bigl|(f_{n}(y)-f_{n}(x)) (g_{n}(z) - g_{n}(y)) - (g_{n}(y) - g_{n}(x))(f_{n}(z) - f_{n}(y))\Bigr| \\ 
&\leq& \sum_{(x,y,z) \in \epsilon(s_{n, n+1})} 2 \cdot c_{f} \cdot c_{g} |y - x|^{\alpha} |z - y|^{\alpha}, 
\end{eqnarray}
where $c_{f}$ and $c_{g}$ are the H\"{o}lder constants of $f$ and $g$ respectively. 

We now define a map to estimate the term $(2)$. For any $\sigma \in \epsilon(s_{n, n+1}) \backslash \epsilon(\tilde{\tilde{s}}_{n, n+1})$ there exists a unique $\omega = (j_{1}, \cdots, j_{n}) \in S^{\times n}$ such that $p_{1}(|\sigma|) \subset \partial F_{\omega}(|X|)$. We therefore have a map $\rho : \epsilon(s_{n, n+1}) \backslash \epsilon(\tilde{\tilde{s}}_{n, n+1}) \rightarrow S^{\times n}$, and define $\tilde{S}^{\times n}$ to be $\operatorname{im}(\rho)$.  We note that, by Lemma \ref{lemma:1}, there exists $M \in \mathbb{N}$ such that $\# \rho^{-1}(\omega) < M$ for any $\omega \in \tilde{S}^{\times n}$. 
Moreover, since $p_{1}(|\sigma|) \subset \partial F_{\omega}(|X|)$ we have an inequality
$$\operatorname{diam}(|\sigma|) = \operatorname{diam}(p_{1}(|\sigma|)) \leq r_{j_{1}} \cdot \cdots \cdot r_{j_{n}} \cdot d_{K_{|X|}},$$
where $(j_{1}, \cdots, j_{n}) = \omega \in \tilde{S}^{\times n}$, $r_{j}$ are the similarity ratios of $F_{j}$ and $d_{K_{|X|}}$ is the diameter of $K_{|X|}$.

On the other hand, we let $L = \# \operatorname{cyc}(I_{1} \backslash I_{0})$ be the number of closed cycles in $I_{1} \backslash I_{0}$. At the $(n+1)$-step, for every $\omega \in S^{\times n}$, there exist $L$ closed cycles in $F_{\omega}(\bigcup_{j \in S} F_{j}(|X|)) = F_{\omega}(|X_{1}|)$. We recall that for every closed cycle $z$ in $I_{n+1}\backslash I_{n}$ there is a $2$-chain $\tilde{\tilde{s}}_{z} \in \tilde{S}_{2}(K^{s}_{n, n+1})$ such that $\epsilon(\tilde{\tilde{s}}_{z}) \subset \epsilon(\tilde{\tilde{s}}_{n, n+1})$ and $\tilde{\partial}_{2}(\tilde{\tilde{s}}_{z}) = \tilde{z}$; see also Section 3.1. Therefore, $\tilde{\tilde{s}}_{n, n+1}$ may be written as
$$\tilde{\tilde{s}}_{n, n+1} = \sum_{\omega \in S^{\times n}} \sum_{1 \leq i \leq L} \tilde{\tilde{s}}_{\omega,\hspace{1pt} z_{i}}.$$
We also recall from Section $3.1$ that for every $\omega \in S^{\times n}$ we have an inclusion map $\tilde{i}_{\omega} : |\tilde{K}_{0,1}^{s}| \hookrightarrow |K_{n,n+1}|$ and 
$$\operatorname{im}(\tilde{i}_{\omega}) \hspace{0.4cm} = \bigcup_{z \in \operatorname{cyc}(I_{n+1} \backslash I_{n})\ {\rm s.t.}\ |z| \subset F_{\omega}(|X_{1}|)} |\epsilon(\tilde{\tilde{s}}_{ z})|.$$
Therefore, since the subdivision of the images $\operatorname{im}(\tilde{i}_{\omega})$ are induced by the subdivision of $|K_{0,1}^{s}|$, 
we may define
$$\overline{M} \hspace{0.3cm}= \sup_{z \in \operatorname{cyc}(I_{n+1} \backslash I_{n})}\{ \# \epsilon(\tilde{\tilde{s}}_{z}) \} \hspace{0.3cm} = \sup_{z \in \operatorname{cyc}(I_{1} \backslash I_{0})}\{ \# \epsilon(\tilde{\tilde{s}}_{z}) \}.$$

From these arguments, $(2)$ is now decomposed into two parts:
\begin{eqnarray} \nonumber
(2)\ \ \ &=& \sum_{(x,y,z) \in \epsilon(s_{n, n+1}) \backslash \epsilon(\tilde{\tilde{s}}_{n, n+1})} 2 \cdot c_{f} \cdot c_{g} |y - x|^{\alpha} |z - y|^{\alpha} \\ \nonumber
&\ & \hspace{3cm} + \sum_{(x,y,z) \in \epsilon(\tilde{\tilde{s}}_{n, n+1})} 2 \cdot c_{f} \cdot c_{g} |y - x|^{\alpha} |z - y|^{\alpha} \\ \nonumber
&\leq& \sum_{(j_{1}, \cdots, j_{n}) \in \tilde{S}^{\times n}} 2 \cdot c_{f} \cdot c_{g} \cdot \# \rho^{-1}(\omega) \cdot (r_{j_{1}}^{2 \alpha}\  \cdots\  r_{j_{n}}^{2\alpha} \cdot d_{K_{|X|}}^{2 \alpha}) \\ \nonumber
&\ & \hspace{3cm} + \sum_{(j_{1}, \cdots, j_{n}) \in S^{\times n}} \sum_{1 \leq i \leq L} 2 \cdot c_{f} \cdot c_{g} \cdot \# \epsilon(\tilde{\tilde{s}}_{z_{i}}) \cdot (r_{j_{1}}^{2 \alpha}\  \cdots\  r_{j_{n}}^{2\alpha} \cdot d_{K_{|X|}}^{2 \alpha})  \nonumber
\end{eqnarray}
\begin{eqnarray} \nonumber
\ &\leq& \sum_{(j_{1}, \cdots, j_{n}) \in S^{\times n}} 2 \cdot c_{f} \cdot c_{g} \cdot M \cdot (r_{j_{1}}^{2 \alpha}\  \cdots\  r_{j_{n}}^{2\alpha} \cdot d_{K_{|X|}}^{2 \alpha}) \\ \nonumber
&\ & \hspace{3cm} + \sum_{(j_{1}, \cdots, j_{n}) \in S^{\times n}}  2 \cdot c_{f} \cdot c_{g} \cdot L \cdot \overline{M} \cdot (r_{j_{1}}^{2 \alpha}\  \cdots\  r_{j_{n}}^{2\alpha} \cdot d_{K_{|X|}}^{2 \alpha}) \\ \nonumber
&=& 2 \cdot c_{f} \cdot c_{g} \cdot d_{K_{|X|}}^{2 \alpha} \cdot (M + L \cdot \overline{M}) \cdot (\sum_{j \in S} r_{j}^{2 \alpha})^{n}. \nonumber
\end{eqnarray}
We denote $2 \cdot c_{f} \cdot c_{g} \cdot d_{K_{|X|}}^{2 \alpha} \cdot (M + L \cdot \overline{M})$ by $K$, and then we have 
\begin{eqnarray}\nonumber
|\phi_{n+k}(f,g) - \phi_{n}(f,g)| &\leq& \sum_{1 \leq i \leq k} |\phi_{n+i}(f,g) - \phi_{n+i-1}(f,g)| \\ \nonumber
&\leq& \sum_{1 \leq i \leq k} K \cdot (\sum_{j \in S} r_{j}^{2 \alpha})^{n+i-1} \\ 
&=&  K \cdot (\sum_{j \in S} r_{j}^{2 \alpha})^{n} \cdot \sum_{1 \leq i \leq k} (\sum_{j \in S} r_{j}^{2 \alpha})^{i-1}. 
\end{eqnarray}
Since we assume that $2 \alpha > \operatorname{dim}_{H}(K_{|X|})$ and $\operatorname{dim}_{H}(K_{|X|})$ is computed by the formula in Theorem 2.13, and therefore the term $  (\sum_{j \in S} r_{j}^{2 \alpha})$ is less than $1$, and the term $  \sum_{1 \leq i \leq k} (\sum_{j \in S} r_{j}^{2 \alpha})^{i-1}$ converges to a finite value as $k$ tends to $\infty$. Therefore, we have 
$$(3) \ \ \ \leq \ \ \  K \cdot \sum_{i = 1}^{\infty} (\sum_{j \in S} r_{j}^{2 \alpha})^{i-1} \cdot (\sum_{j \in S} r_{j}^{2 \alpha})^{n},$$
and the right hand side also converges to $0$ as $n$ tends to $\infty$. This completes the proof of the theorem.
\end{proof}
From now on, we assume that $2 \alpha > \operatorname{dim}_{H}(K_{|X|})$ and define a bilinear map 
$$\phi : C^{\alpha}(K_{|X|}) \times C^{\alpha}(K_{|X|}) \rightarrow \mathbb{C}$$ 
by $\phi(f,g) = \lim_{n \rightarrow \infty}\phi_{n}(f,g)$.

\begin{lemma}
The map $\phi$ is independent of the choice of $I_{n}$.
\end{lemma}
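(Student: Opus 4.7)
The plan is to reduce the independence claim to two observations: (a) the $1$-cochain $\omega_{n}(f,g)$ is antisymmetric in its two entries and vanishes on degenerate $1$-simplices, so it descends through the quotient $\pi : \tilde S_{1}(X_{n}^{s}; \mathbb{C}) \to C_{1}(X_{n}^{s}; \mathbb{C})$; and (b) the image $\pi(I_{n}) \in C_{1}(X_{n}^{s}; \mathbb{C})$ is canonically attached to the basis $B_{n}$ and does not depend on the particular lift $s_{n} \in \pi^{-1}(c_{n})$.

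For observation (a), I would compute directly: for any formal $1$-simplex $(x,y) \in S_{1}(X_{n}^{s})$,
$$\omega_{n}(f,g)(x,y) = f(x)\bigl(g(y)-g(x)\bigr) - g(x)\bigl(f(y)-f(x)\bigr) = f(x)g(y) - g(x)f(y).$$
This vanishes when $x=y$ and flips sign under $(x,y)\mapsto(y,x)$, so $\omega_{n}(f,g)$ descends to a $\mathbb{C}$-linear functional $\bar\omega_{n}(f,g)$ on $C_{1}(X_{n}^{s}; \mathbb{C})$, i.e.\ $\omega_{n}(f,g) = \bar\omega_{n}(f,g)\circ\pi$.

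For observation (b), I would invoke the chain-map property of $\pi$ recorded in the commutative square of Section 3.1: by the way $s_{n}$ was selected, $\pi(s_{n}) = c_{n}$, hence $\pi(b_{n}) = \pi(\tilde\partial_{2} s_{n}) = \partial_{2} c_{n}$, which is manifestly independent of the lift. The $1$-chain $o_{n}$ is obtained from $b_{n}$ by restricting to those summands whose geometric support lies in $\partial|X|$ and retaining their signs in $\tilde\partial_{2} s_{n}$; applying $\pi$ therefore yields the analogous subsum of $\partial_{2} c_{n}$, and this too depends only on $c_{n}$. Consequently $\pi(I_{n}) = \pi(b_{n}) - \pi(o_{n})$ is canonically determined.

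Combining the two steps, $\phi_{n}(f,g) = \omega_{n}(f,g)(I_{n}) = \bar\omega_{n}(f,g)\bigl(\pi(I_{n})\bigr)$ depends only on $c_{n}$, $f$ and $g$; invoking the existence theorem to pass to the limit then yields the corresponding invariance of $\phi(f,g)$. I expect no genuine obstacle, since the main content is the antisymmetry of $\omega_{n}(f,g)$ on $1$-simplices, which is immediate from the formula for the cup product; the rest is bookkeeping through the commutative square relating $\tilde S_{*}$ and $C_{*}$.
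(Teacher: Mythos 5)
Your proof is correct, but it takes a genuinely different route from the paper's. You prove the \emph{exact} identity $\phi_{n}(f,g)=\phi_{n}'(f,g)$ at every finite stage, by observing that the $1$-cochain $\omega_{n}(f,g)$ is alternating and vanishes on degenerate $1$-simplices (indeed $\omega_{n}(f,g)(x,y)=f(x)g(y)-g(x)f(y)$), hence factors through the quotient $\pi:\tilde S_{1}(X_{n}^{s};\mathbb{C})\to C_{1}(X_{n}^{s};\mathbb{C})$, while $\pi(I_{n})=\partial_{2}c_{n}-(\partial_{2}c_{n})\big|_{\partial|X|}$ is pinned down by $c_{n}$ alone; this is a purely algebraic descent argument and needs no metric input. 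The paper instead argues geometrically and asymptotically: for two lifts $I_{n},I_{n}'\in\pi^{-1}([I_{n}])$ with the same support it produces a $2$-chain $\hat s_{n}$ on a cylinder $|\epsilon(I_{n})|\times[0,1]$ with $\tilde\partial_{2}(\hat s_{n})=I_{n}-I_{n}'$, and then bounds $|\phi_{n}(f,g)-\phi_{n}'(f,g)|=|\delta\omega_{n}(f_{n},g_{n})(\hat s_{n})|$ by $4\,c_{f}c_{g}d_{K_{|X|}}^{2\alpha}(\sum_{j}r_{j}^{2\alpha})^{n}\to 0$, so the two quasi-cocycles merely converge to the same limit. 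Your argument is stronger and more elementary where it applies (it uses only the Spanier comparison between ordered and oriented chains, which the paper already records in Remark 3.5, and does not require $2\alpha>\dim_{H}(K_{|X|})$ for this particular step); the paper's cobounding-plus-H\"older estimate is more robust in that it would also handle lifts that are merely homologous rather than equal in $C_{1}$, and it reuses the machinery of the existence theorem. Both establish the lemma as stated.
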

\begin{proof}
In order to check the mentioned property of the bilinear map $\phi : C^{\alpha}(K_{|X|}) \times C^{\alpha}(K_{|X|}) \rightarrow \mathbb{C}$, we have to show that the cyclic quasi-$1$-cocycle converges to the same value regardless of the choice of $I_{n}$ which represents the given orientation. Let $I_{n}$, $I'_{n} \in \pi^{-1}([I_{n}])$ such that $|\epsilon (I_{n})| = |\epsilon (I_{n}')|$ and $\phi_{n}'(f, g) = (f \smile \delta g)(I_{n}') - (g \smile \delta f)(I_{n}')$. Then there exists a $2$-dimensional simplicial complex $J_{n}$ such that $|J_{n}| = |\epsilon(I_{n})| \times [0, 1]$, and we may choose $\hat{s}_{n} \in \tilde{S}_{2}(J_{n};\mathbb{C})$ such that $\tilde{\partial}_{2}(\hat{s}_{n}) = I_{n} - I'_{n}$. We endow $|J_{n}|$ with a metric similar to the metric on $|K_{n,n+1}|$, and then we have 
\begin{eqnarray}\nonumber
|\phi_{n}(f,g) - \phi'_{n}(f,g)| 
&=& |\delta \omega_{n}(f_{n}, g_{n})(\hat{s}_{n})| \\ \nonumber
&\leq& 2 \sum_{(x, y, z) \in \epsilon(\hat{s}_{n})} c_{f} \cdot c_{g} \cdot |y - x|^{\alpha} \cdot |z - y|^{\alpha} \\ \nonumber
&\leq& 2 \sum_{(x, y) \in \epsilon(I_{n})} 2 \cdot c_{f} \cdot c_{g} \cdot |y - x|^{2\alpha} \\ \nonumber
&\leq& 2 \sum_{(j_{1}, \cdots, j_{n}) \in \tilde{S}^{\times n}} 2 \cdot c_{f} \cdot c_{g} \cdot d_{K_{|X|}}^{2 \alpha} \cdot r_{j_{1}}^{2 \alpha} \cdot \cdots \cdot r_{j_{n}}^{2 \alpha} \\ \nonumber
&\leq& 4 \cdot c_{f} \cdot c_{g} \cdot d_{K_{|X|}}^{2 \alpha} \cdot (\sum_{j \in S} r_{j}^{2 \alpha})^{n} \\ \nonumber
&\rightarrow& 0, \ \ \ {\rm as}\ n \rightarrow \infty. \nonumber
\end{eqnarray}
This completes the proof of the well-definedness of $\phi$.
\end{proof}

Based on the proof of Theorem 3.8, we can prove the following corollary.
\begin{corollary} For any $f$, $g \in C^{\alpha}(K_{|X|})$, we have 
$$\phi(f, g) = 2 \int_{\partial |X|} f dg\ = 2 \cdot\ ({\rm Young\ integral\ along\ \partial|X|}).$$
In particular, for $1$ and $x := id \in C^{\alpha}(K_{|X|})$,  
$$\phi(1, x) = 2 \int_{\partial |X|}  dx =2 \cdot ( {\rm length\ of}\ \partial |X|).$$ 
\end{corollary}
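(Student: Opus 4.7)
The plan is to exploit the decomposition $I_n = b_n - o_n$ built into Section 3.1, so that
\[
\phi_n(f,g) = \omega_n(f,g)(I_n) = \omega_n(f,g)(b_n) - \omega_n(f,g)(o_n),
\]
and then to show that the first term vanishes in the limit while the second converges to twice the Young integral along $\partial|X|$.

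The first step is the decay of $\omega_n(f,g)(b_n)$. Applying the Leibniz rule for the Alexander-Spanier cup product (together with $\delta^2 = 0$) to $\omega_n(f,g) = f \smile \delta g - g \smile \delta f$ gives $\delta \omega_n(f,g) = \delta f \smile \delta g - \delta g \smile \delta f$, hence
\[
\omega_n(f,g)(b_n) = \omega_n(f,g)(\tilde{\partial}_2 s_n) = (\delta \omega_n(f,g))(s_n) = \sum_{(x,y,z) \in \epsilon(s_n)} \bigl[(f(y)-f(x))(g(z)-g(y)) - (g(y)-g(x))(f(z)-f(y))\bigr].
\]
Each $2$-simplex of $\epsilon(s_n)$ lies inside a unique cell $F_\omega(|X|)$ with $\omega \in S^{\times n}$, so its diameter is at most $r_\omega \cdot d_{|X|}$. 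The $\alpha$-H\"older estimate used in the proof of Theorem 3.8 then bounds this sum by $C \bigl(\sum_{j \in S} r_j^{2\alpha}\bigr)^n$ for a constant $C$ depending only on the H\"older constants, $d_{|X|}$, and the chosen triangulation of $|X|$; since $2\alpha > \dim_H(K_{|X|})$, Theorem 2.13 forces $\sum_j r_j^{2\alpha} < 1$ and the bound tends to $0$.

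The second step identifies $\omega_n(f,g)(o_n)$ with a Young integral. By construction $|\epsilon(o_n)| = \partial|X|$ and $\epsilon(o_n)$ is a subdivision of $\partial|X|$ whose mesh is at most $(\max_j r_j)^n \cdot d_{|X|}$, hence tends to $0$. Expanding the cup product,
\[
\omega_n(f,g)(o_n) = \sum_{(x,y) \in \epsilon(o_n)} \operatorname{sgn}(x,y) \bigl[f(x)(g(y)-g(x)) - g(x)(f(y)-f(x))\bigr],
\]
which is the difference of two Riemann--Stieltjes sums of the form in Definition 2.1 along a refining sequence of subdivisions of $\partial|X|$. Since $f,g \in C^{\alpha}(K_{|X|}) \subset W_\alpha$ and $2\alpha > 1$, Theorem 2.2 yields convergence to $\int_{\partial|X|}^{Young} f\,dg - \int_{\partial|X|}^{Young} g\,df$. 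On each closed polygonal loop composing $\partial|X|$, Young integration by parts $\int f\,dg + \int g\,df = [fg]$ vanishes by telescoping the end-point values around the loop, so the difference collapses to $2\int_{\partial|X|}^{Young} f\,dg$. Combined with the vanishing of $\omega_n(f,g)(b_n)$, this yields $\phi(f,g) = 2\int_{\partial|X|}^{Young} f\,dg$, with the sign fixed by the counterclockwise orientation of the $2$-chain $s_n$. Specialising $f=1$ and $g = x := \operatorname{id}$, each edge contribution to $\int^{Young} dx$ reduces to the length of that edge by Definition 2.1, yielding $\phi(1,x) = 2 \cdot \operatorname{length}(\partial|X|)$; the hypothesis $|X| \neq |X_1|$ merely guarantees the statement concerns a non-trivial cellular self-similar set.

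The main obstacle is the Young integration-by-parts step, i.e.\ justifying $\int_C f\,dg + \int_C g\,df = 0$ around each closed polygonal loop $C$ for $\alpha$-H\"older $f, g$ with $2\alpha > 1$, and carefully tracking the orientation sign inherited from the counterclockwise convention on $s_n$. The decay of $\omega_n(f,g)(b_n)$, by contrast, is essentially a direct transcription of the chief inequality already established in the proof of Theorem 3.8.
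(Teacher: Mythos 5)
Your proposal is correct and follows essentially the same route as the paper: the paper's own proof likewise splits $\phi_n(f,g)=\omega_n(f,g)(b_n)-\omega_n(f,g)(o_n)$, kills the $b_n$-term by the self-similar H\"older estimate from Theorem 3.8, and identifies the $o_n$-term with the Young integration along $\partial|X|$. You simply supply the details (the direct bound $|\omega_n(f,g)(b_n)|\leq C(\sum_j r_j^{2\alpha})^n$ and the integration-by-parts argument on closed loops producing the factor $2$) that the paper leaves implicit.
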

\begin{proof}
By the construction of the cyclic quasi-$1$-cocycle of $f$, $g \in C^{\alpha}(K_{|X|})$, we have
$$\phi_{n}(f, g) = \omega_{n}(f, g)(I_{n}) = -\omega_{n}(f, g)(o_{n}) + \omega_{n}(f, g)(b_{n}).$$
The proof of Theorem 3.8 yields directly that the sequence $\{ \omega_{n}(f, g)(b_{n})\}_{n \in \mathbb{Z}_{\geq 0}}$ converges to $0$ if $2 \alpha > \operatorname{dim}_{H}(K_{|X|})$.  Since $\{ \omega_{n}(f, g)(o_{n}) \}_{n \in \mathbb{Z}_{\geq 0}}$ provides the Young integration along $\partial |X|$, which is the finite union of closed segments, we get the mentioned equalities.
\end{proof}

\begin{theorem} Under the assumption of Theorem 3.8 {\rm :} 
\begin{itemize}
\item[a)] The bilinear map $\phi$ gives rise to a cyclic $1$-cocycle of $C^{\alpha}(K_{|X|})$. 
\item[b)] If $|X| \neq |X_{1}|$, the cocycle $\phi$ represents a non-trivial element $[\phi]$ in $HC^{1}(C^{\alpha}(K_{|X|}))$.
\end{itemize}
\end{theorem}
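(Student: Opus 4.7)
The cyclic (antisymmetry) condition $\phi(f, g) = -\phi(g, f)$ is immediate, since at every finite level $\omega_n(f, g) = f \smile \delta g - g \smile \delta f$ is manifestly antisymmetric in $(f, g)$, and this passes to the limit. The heart of part (a) is therefore the Hochschild cocycle identity $b\phi(f_0, f_1, f_2) = 0$. My plan is to pass it to the level of 1-cochains and establish the identity
\begin{equation*}
b\omega_n(f_0, f_1, f_2) \;=\; -\,\delta(f_0 f_1 f_2) \;+\; \delta f_0 \cdot \delta f_1 \cdot \delta f_2,
\end{equation*}
where $b\omega_n(f_0, f_1, f_2) := \omega_n(f_0 f_1, f_2) - \omega_n(f_0, f_1 f_2) + \omega_n(f_2 f_0, f_1)$ and the last summand sends a 1-simplex $(x, y)$ to the scalar $(f_0(y) - f_0(x))(f_1(y) - f_1(x))(f_2(y) - f_2(x))$. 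This identity is derived by iteratively applying the Leibniz rule $\delta(f \smile g) = \delta f \smile g + (-1)^{|f|} f \smile \delta g$ to expand each $\omega_n(f_i f_j, f_k)$ and carefully collecting terms.

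Given this identity, the conclusion of (a) follows quickly. By construction $b_n = \tilde{\partial}_2(s_n)$ is a 1-boundary, so $\tilde{\partial}_1(b_n) = 0$; and $o_n$ traces the closed polygonal components of $\partial|X|$, so $\tilde{\partial}_1(o_n) = 0$. Hence $\tilde{\partial}_1(I_n) = 0$, and the exact term contributes nothing by coboundary-boundary adjunction: $\delta(f_0 f_1 f_2)(I_n) = (f_0 f_1 f_2)(\tilde{\partial}_1 I_n) = 0$. The remaining triple-product term obeys
\[
\bigl|(\delta f_0 \cdot \delta f_1 \cdot \delta f_2)(I_n)\bigr| \;\leq\; c_{f_0} c_{f_1} c_{f_2} \sum_{(x, y) \in \epsilon(I_n)} |y - x|^{3\alpha},
\]
and the geometric bounds used in the proof of Theorem 3.8 give $\sum_{(x, y) \in \epsilon(I_n)} |y-x|^{3\alpha} \leq C \bigl(\sum_{j \in S} r_j^{3\alpha}\bigr)^n$. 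Since $3\alpha > 2\alpha > \dim_H(K_{|X|})$, Theorem 2.13 implies $\sum_{j \in S} r_j^{3\alpha} < 1$, so this bound vanishes as $n \to \infty$. Therefore $b\phi = 0$, and combined with the antisymmetry, $\phi$ is a cyclic 1-cocycle.

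For part (b), I would use the canonical pairing between cyclic cohomology and Hochschild homology. Because $C^\alpha(K_{|X|})$ is commutative, $b(1 \otimes x) = 1 \cdot x - x \cdot 1 = 0$ in the Hochschild complex, so $1 \otimes x$ is a Hochschild 1-cycle. The evaluation $\phi \otimes \sigma \mapsto \phi(\sigma)$ descends to a well-defined pairing $HC^1 \otimes HH_1 \to \mathbb{C}$: if $\phi$ were a cyclic coboundary $b\psi$, then $\phi(\sigma) = \psi(b\sigma) = 0$ on any Hochschild 1-cycle $\sigma$. But Corollary 3.10 gives $\phi(1, x) = 2 \cdot (\text{length of }\partial|X|) \neq 0$ under the hypothesis $|X| \neq |X_1|$, contradicting $[\phi] = 0$. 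Hence $[\phi] \neq 0$ in $HC^1(C^\alpha(K_{|X|}))$.

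The main technical hurdle is establishing the displayed 1-cochain identity. Its content is that the Hochschild coboundary of $\omega_n$ splits as an exact 1-cochain (which vanishes on the cycle $I_n$) plus a \emph{triple-differential} correction of H\"{o}lder order $3\alpha$ rather than $2\alpha$. This extra factor of $|y - x|^\alpha$ is precisely what upgrades the $2\alpha$-level Cauchy estimate of Theorem 3.8 into an honest vanishing statement, and thereby pins down the cocycle property; everything else reduces either to that Cauchy machinery or to the one-line pairing argument with Corollary 3.10.
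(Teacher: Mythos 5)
Your proposal is correct and follows essentially the same route as the paper: cyclicity from the level-$n$ antisymmetry of $\omega_{n}$, the Hochschild identity obtained via the Leibniz rule for the Alexander--Spanier cup product (your displayed cochain identity $b\omega_{n}(f_{0},f_{1},f_{2})=-\delta(f_{0}f_{1}f_{2})+\delta f_{0}\cdot\delta f_{1}\cdot\delta f_{2}$ checks out and is exactly what the paper derives term by term, with the exact part killed because $I_{n}$ is a cycle), the $3\alpha$-order estimate giving $b\phi_{n}\rightarrow 0$, and non-triviality via the pairing of $[\phi]$ with the Hochschild $1$-cycle $1\otimes x$ using Corollary 3.10. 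No gaps.
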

\begin{proof}
We have a linear map $\phi : C^{\alpha}(K_{|X|}) \otimes C^{\alpha}(K_{|X|}) \rightarrow \mathbb{C}$. It follows immediately that the cocycle satisfies the cyclic condition since $\phi_{n}(f,g)$ satisfies the cyclic condition for any $n \in \mathbb{Z}_{\geq 0}$. Accordingly, it remains to show that $\phi$ is a Hochschild cocycle. For $f$, $g$, $h \in C^{\alpha}(K_{|X|})$, we may write $b\phi(f, g, h)$ as
\begin{eqnarray}\nonumber
b\phi(f,g,h) &=& \phi(fg, h) - \phi(f, gh) + \phi(hf, g) \\ \nonumber
&=& \lim_{n \rightarrow \infty}\phi_{n}(fg, h) - \lim_{n \rightarrow \infty}\phi_{n}(f, gh) + \lim_{n \rightarrow \infty}\phi_{n}(hf, g) \\ \nonumber
&=& \lim_{n \rightarrow \infty}\Bigl(\phi_{n}(fg, h) -\phi_{n}(f, gh) + \phi_{n}(hf, g)\Bigr) \\ \nonumber
&=& \lim_{n \rightarrow \infty}b\phi_{n}(f, g, h).
\end{eqnarray}
Therefore, to prove that $b \phi(f, g, h) = 0$ is equivalent to prove that $  \lim_{n \rightarrow \infty}b\phi_{n}(f, g, h) = 0$. 
Using $\delta(\eta \smile \tau) = \delta \eta \smile \tau + (-1)^{\operatorname{deg}(\eta)} \eta \smile \delta \tau$, we have 
\begin{eqnarray} \nonumber
\phi_{n}(fg, h) &=& \Bigl(fg \smile \delta h - h \smile \delta(fg)\Bigr)(I_{n}) \\ \nonumber
&=& \Bigl((f \smile g \smile \delta h) - (h \smile \delta f \smile g) - (h \smile f \smile \delta g)\Bigr)(I_{n}). \nonumber
\end{eqnarray}
Similarly, 
$$\phi_{n}(f, gh)\ \ \ =\ \ \ \Bigl((f \smile \delta g \smile h) + (f \smile g \smile \delta h) - (g \smile h \smile \delta f)\Bigr)(I_{n}),$$
$$\phi_{n}(hf, g)\ \ \ =\ \ \ \Bigl((h \smile f \smile \delta g) - (g \smile \delta h \smile f) - (g \smile h \smile \delta f)\Bigr)(I_{n}).$$
Therefore, 
\begin{eqnarray}
b\phi_{n}(f, g , h) &=& - \Bigl((h \smile \delta f \smile g) + (f \smile \delta g \smile h) + (g \smile \delta h \smile f)\Bigr)(I_{n}).
\end{eqnarray}
Since 
\begin{eqnarray}\nonumber
- (h \smile \delta f \smile g)(I_{n}) &=& \Bigl((\delta h \smile f \smile g) + (h \smile f \smile \delta g) - (\delta(hfg))\Bigr)(I_{n}) \\ \nonumber
&=& \Bigl((\delta h \smile f \smile g) + (h \smile f \smile \delta g)\Bigr)(I_{n}), \nonumber
\end{eqnarray}
we have 
\begin{eqnarray} \nonumber
(4) &=& \Bigl((\delta h \smile f \smile g) + (h \smile f \smile \delta g) - (f \smile \delta g \smile h) - (g \smile \delta h \smile f)\Bigr)(I_{n}) \\ \nonumber
&=& \sum_{(x,y) \in \epsilon(I_{n})} \pm \Bigl((\delta h \smile f \smile g) + (h \smile f \smile \delta g) - (f \smile \delta g \smile h) - (g \smile \delta h \smile f)\Bigr)(x,y) \\ \nonumber
&=& \sum_{(x,y) \in \epsilon(I_{n})} \pm (h(y) - h(x)) (g(y) - g(x)) (f(y) - f(x)). \nonumber
\end{eqnarray}
Therefore
\begin{eqnarray} \nonumber
|b \phi_{n}(f, g, h)| &\leq& \sum_{(x,y) \in \epsilon(I_{n})} |h(y) - h(x)| \cdot |g(y) - g(x)| \cdot |f(y) - f(x)| \\ \nonumber
&=& \sum_{(x,y) \in \epsilon(I_{n})} c_{f} \cdot c_{g} \cdot c_{h} \cdot |x - y|^{3 \alpha} \\ \nonumber
&\leq& c_{f} \cdot c_{g} \cdot c_{h} \cdot d_{K_{|X|}}^{3 \alpha} \cdot M \sum_{(j_{1}, \cdots, j_{n}) \in S^{\times n}} r_{j_{1}}^{3 \alpha} \cdot \cdots \cdot r_{j_{n}}^{3 \alpha} \\ \nonumber
&=& c_{f} \cdot c_{g} \cdot c_{h} \cdot d_{K_{|X|}}^{3 \alpha} \cdot M \cdot (\sum_{j \in S} r_{j}^{3 \alpha})^{n} \\ \nonumber
& \rightarrow & 0, \ \ \ {\rm as}\ n \rightarrow \infty.
\end{eqnarray}
This completes the proof of (a).

We now prove (b). We note that we have the pairing 
$$HH_{1}(C^{\alpha}(K_{|X|})) \times HH^{1}(C^{\alpha}(K_{|X|})) \rightarrow \mathbb{C}.$$ As seen in Theorem 3.10, we know that $\phi(1 \otimes x) \neq 0$, and this completes the proof of (b).
\end{proof} 
\begin{remark}

The algebra of $\alpha$-H\"{o}lder continuous functions on a compact metric space admits a Banach topology and it turns out to be a Banach algebra. However, we do not know whether or not the cocycle of Theorem 3.11 is continuous in the sense of a map between Banach algebras.

\end{remark}

\subsection{Examples}
We examine the cyclic cocycle on some cellular self-similar sets. The spaces on which the cocycles are examined are the examples given in Section $2.2$. \\


$\bullet$ {\bf Sierpinski gasket}\\
The theorems in the Section 3.3 may be applied to the Sierpinski gasket $SG$, and the cyclic $1$-cocycle $\phi$ on $C^{\alpha}(SG)$ is well-defined for $2\alpha > \dim_{H}(SG) = \log_{2}3$. Moreover, the cocycle is non-trivial since $|X| \neq |X_{1}|$.

$\bullet$ {\bf pinwheel fractal}\\
Pinwheel fractal $PF$ may also be seen as a cellular self-similar set. The self-similar structure consists of $4$ similitudes whose ratios are $\frac{1}{\sqrt{5}}$, see also section $2$. The cyclic cocycle is well-defined if $2 \alpha > \dim_{H}(PF) = \log_{\sqrt{5}}4$ and non-trivial in $HC^{1}$.\\

$\bullet$ {\bf Infinite isolated Sierpinski gaskets}\\
The second example in Section $2.2$ is a cellular self-similar structure, and we denote by $ISG$ the resulting cellular self-similar set. The cyclic $1$-cocycle may be defined on the space, and the cocycle is non-trivial. From now, we also discuss the structure of $HC^{0}(C^{Lip}(ISG))$. 

By the self-similar structure of $ISG$, $  \pi_{0}(ISG) = \bigoplus_{p \in \mathbb{N}} \mathbb{Z}$, each of whose summands corresponds to a connected component $Y_{p}$ of $ISG$. Therefore $ISG$ may be written as 
$$ISG = \bigsqcup_{p \in \mathbb{N}} Y_{p}.$$
Then we have the canonical inclusion map 
$$in_{p} : Y_{p} \rightarrow \bigsqcup_{p \in \mathbb{N}} Y_{p} = ISG$$
for any $p \in \mathbb{N}$. We now fix a base point $y_{p} \in Y_{p}$ for each $p \in \mathbb{N}$, and define a cyclic $0$-cocycle $\psi_{p}$ of $C^{Lip}(Y_{p})$ by taking the value of $y_{p}$ for any $f \in C^{Lip}(Y_{p})$. Therefore, the canonical inclusion map $in_{p}$ induces the map of cyclic cohomology groups:
$$(in_{p})^{*} : HC^{0}(C^{Lip}(Y_{p})) \rightarrow HC^{0}(C^{lip}(ISG)).$$

We now let $P$ be a finite subset of $\mathbb{N}$ and assume that $\Psi_{P} = \sum_{p \in P} \alpha_{p}(in_{p})_{*}([\psi_{p}]) = 0$. We also define $c_{p} \in C^{Lip}(ISG)$ by
\[
  c_{p}(y) = \begin{cases}
    1, & y \in Y_{p} \\
    0, & otherwise.
  \end{cases}
\]
Then, for any $\tilde{p} \in P$, we have a pairing of the Hochschild homology group and the Hochschild cohomology group of $C^{Lip}(ISG)$:
$$0 = \langle \Psi_{P},\ c_{p} \rangle = \sum_{p \in P} \alpha_{p}(in_{p})_{*}([\psi_{p}])(c_{\tilde{p}}) = \alpha_{\tilde{p}},$$
and which means that the set $\{(in_{p})_{*}([\psi_{p}])\}_{p \in P}$ is a linearly independent set. Since this argument also works for any finite set $P$ of $\mathbb{N}$, we can conclude that $\{(in_{p})_{*}([\psi_{p}])\}_{p \in \mathbb{N}}$ forms a linearly independent set of $HC^{0}(C^{Lip}(ISG))$, and therefore $HC^{0}(C^{Lip}(ISG))$ contains $\bigoplus_{p \in \mathbb{N}} \mathbb{C}$ as a $\mathbb{C}$-vector space.

\subsection{Further work}
Strichartz introduces the notion of ``fractafold" \cite{strichartz1, strichartz2}, and on which he examines fractal versions of the classical theories, for example,  Hodge-de Rham theory, spectral theory, homotopy theory. In particular, the Laplacian on some kinds of self-similar sets has been extensively studied, and it is applied to various fields  \cite{barlow, kigami, strichartz1, strichartz2}. Here, we will give some examples of finite unions of cellular self-similar sets. 
\begin{center}
  \includegraphics[bb = 0 0 1057 292, width = 140mm]{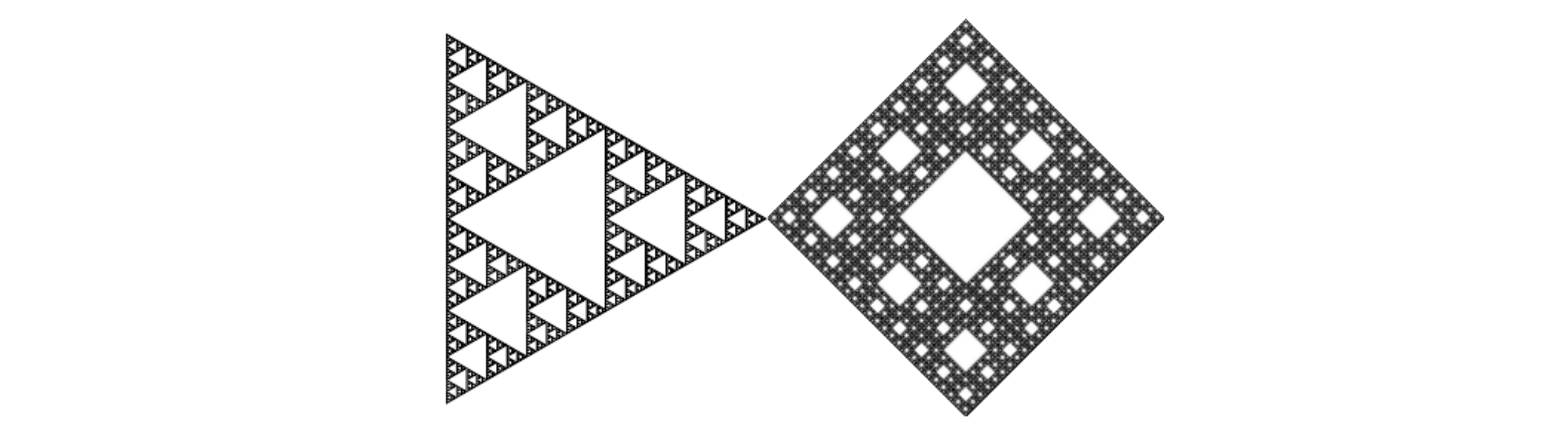}
\end{center}
The first example is the wedge sum of Sierpinski gasket and Sierpinski carpet with base points at their corners. Then the space is neither a cellular self-similar set nor a fractafold. However, the theorem may be applied to the space. Namely, the space is seen as the projective limit of the following spaces:
\begin{center}
  \includegraphics[bb = 0 0 1057 292, width = 140mm]{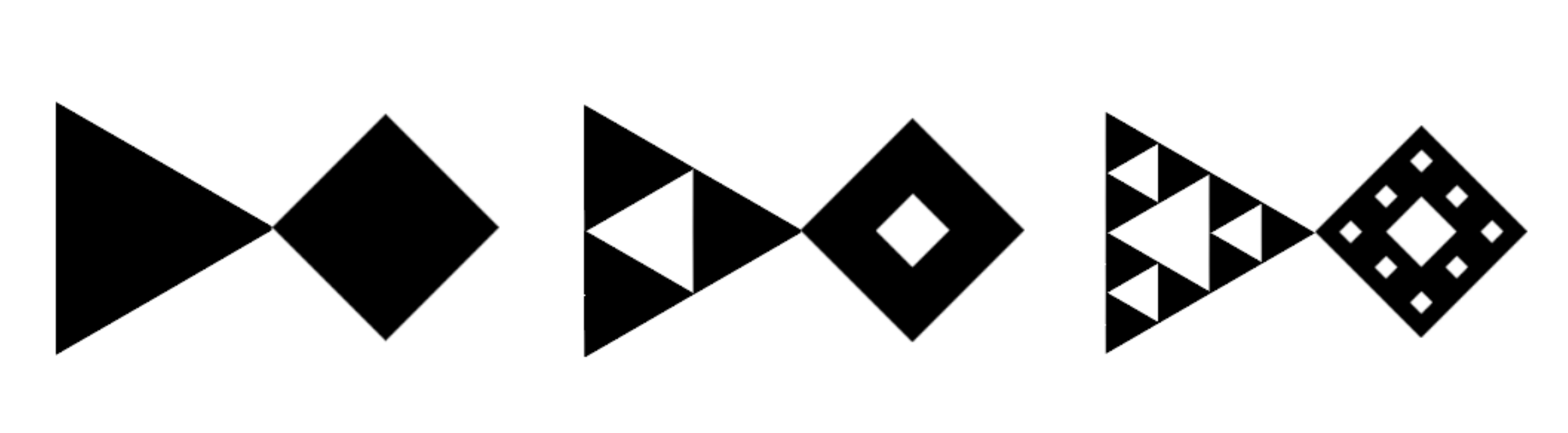}
\end{center}
The figure is obtained by taking the wedge sum of the sequences which give rise to Sierpinski gasket and Sierpinski carpet. Similarly, we have sequences of boundary chains $b_{0}$, $b_{1}$, $b_{2}$ and inner chains $I_{0}$, $I_{1}$, $I_{2}$ respectively:
\begin{center}
  \includegraphics[bb = 0 0 890 227, width = 140mm]{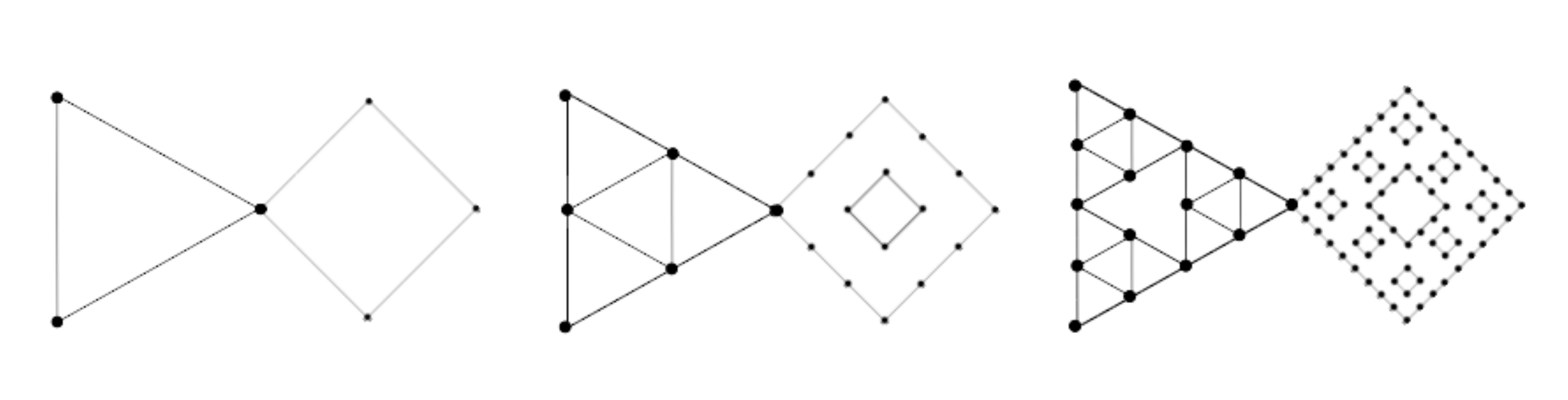}
  \includegraphics[bb = 0 0 890 227, width = 140mm]{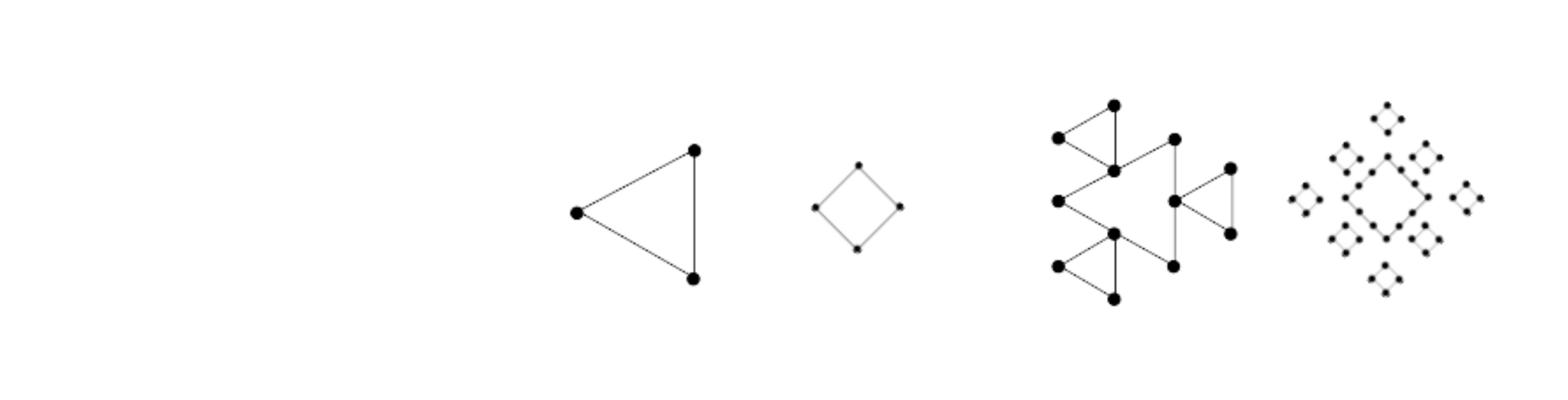}
\end{center}
We therefore have a cyclic quasi-$1$-cocycle, and the quasi-cocycle can be written by the element-wise sum of cyclic quasi-$1$-cocycles of $SG$ and $SC$. In order that that cyclic quasi-$1$-cocycle is a Cauchy sequence, it is enough that the H\"{o}lder index $\alpha$ satisfies the inequality $2 \alpha > \dim_{H}(SC)$. 

From the point of this view, $SG$ can be seen as a union of $3$ Sierpinski gaskets, and therefore $SG$ may be seen as a ``fractafold with boundary", see \cite{strichartz1, strichartz2} for details. As defined in the previous subsection, we have a cyclic cocycle on $SG$.

Finally, we will define a cyclic cocycle of the algebra of Lipschitz functions defined on a ``fractafold" based on the Sierpinski gasket:
\begin{center}
  \includegraphics[bb = 0 0 1057 292, width = 140mm]{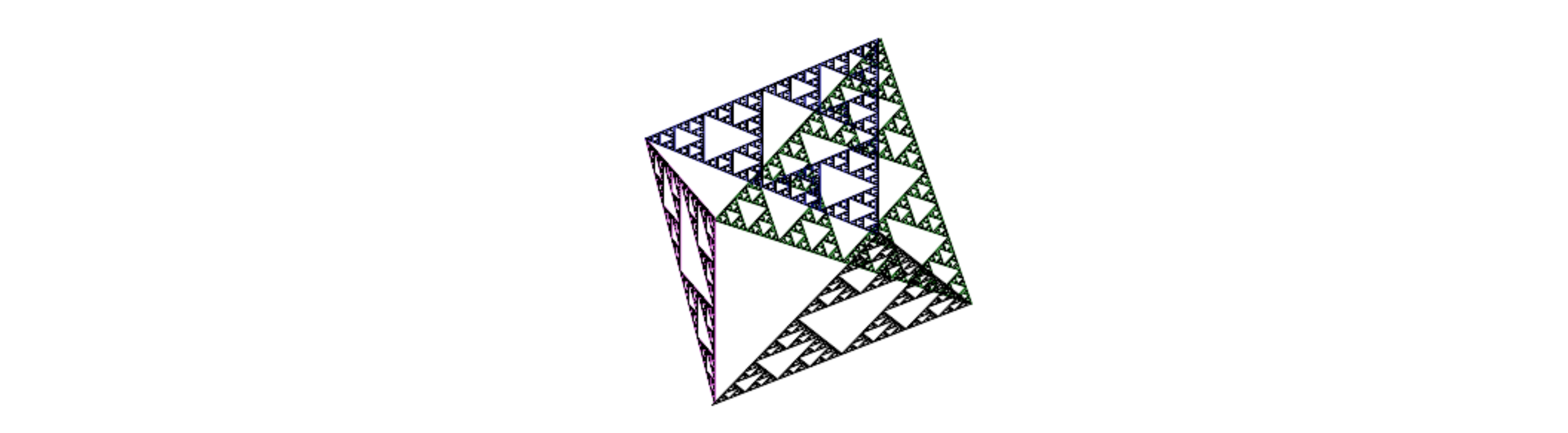}
\end{center}
The space is a union of four copies of Sierpinski gasket in $\mathbb{R}^{3}$ obtained by gluing the points at corners of a copy with each corner of the other Sierpinski gaskets. This space is one of the examples of what Strichartz calls ``fractafolds without boundaries", and we denote it by $FSG$. The space $FSG$ can be seen as the projective limit of a sequence of the spaces that is obtained by gluing copies of the sequence which gives rise to $SG$.
\begin{center}
  \includegraphics[bb = 0 0 1057 292, width = 140mm]{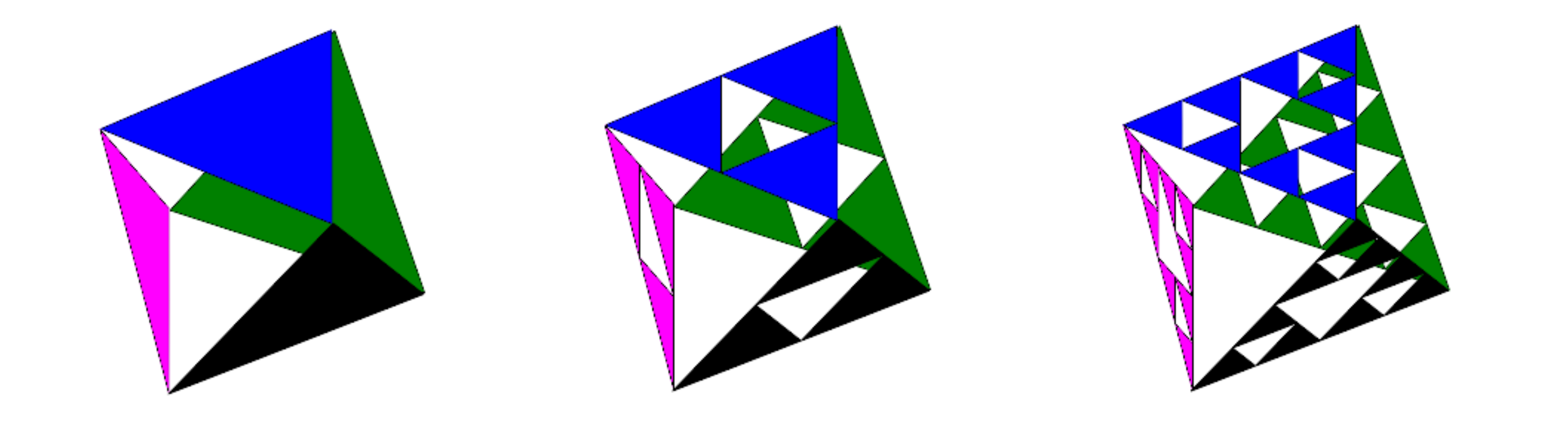}
\end{center}
We therefore get, by applying the theorem to each Sierpinski gasket, a cyclic $1$-cocycle on $C^{\alpha}(FSG)$ when $2 \alpha > \log_{2}3$. 

\begin{remark}
Strichartz introduces the Hodge-de Rham theory for fractal graphs \cite{acsy}. In this paper, Laplacian on some fractal sets are defined by exploiting the Alexander-Spanier cochain complexes. However, we do not know whether or not there exist any relation between the cyclic $1$-cocycle defined in the present paper and the Laplacian of \cite{acsy}.
\end{remark}

\bibliographystyle{te}
\bibliography{ref}


\Addresses

\end{document}